\definecolor{red}{rgb}{1,0,0}
\definecolor{dblau}{rgb}{0,0,0.45}
\definecolor{blau}{rgb}{0,0,0.75} 
\newtheorem{theorem}{Theorem}
\newtheorem{lemma}{Lemma}
\newtheorem{prop}{Proposition}
\newtheorem{coroll}{Corollary}
\theoremstyle{definition}
\newcommand{\myt}[1]{{\it\color{dblau}#1}}
\newcommand{\JAP}{\emph{J. Appl. Probab.}}
\newcommand{\PTRF}{\emph{Probab. Theory Related Fields}}
\newcommand{\RSA}{\emph{Random Structures Algorithms}}
\newcommand{\CPC}{\emph{ Combin. Probab. Comput.}}
\newcommand{\sub}{\emph{submitted for publication.}}
\newcommand{\ava}{Online available on the arXiv: }
\newcommand{\mom}{\text{model}\,\ensuremath{\mathcal{M}}}
\newcommand{\mor}{\text{model}\,\ensuremath{\mathcal{R}}}
\newcommand{\momp}{\text{model}\,\ensuremath{\mathcal{M}}\,}
\newcommand{\morp}{\text{model}\,\ensuremath{\mathcal{R}}\,}
\newcommand{\fallfak}[2]{\ensuremath{#1^{\underline{#2}}}}
\newcommand{\Stir}[2]{\genfrac{ \{ }{ \} }{0pt}{}{#1}{#2}}
\newcommand{\N}{\ensuremath{\mathbb{N}}}
\newcommand{\Z}{\ensuremath{\mathbb{Z}}}
\newcommand{\Gro}{\ensuremath{\mathcal{O}}}
\def\P{{\mathbb {P}}}
\def\E{{\mathbb {E}}}
\newcommand{\sN}{\ensuremath{\mathcal{N}}}
\newcommand{\sW}{\ensuremath{\mathcal{W}}}
\newcommand{\fW}{\ensuremath{\mathfrak{W}}}
\newcommand{\I}[1]{\ensuremath{\mathbf{1}_{ \{ #1 \} }}}
\newcommand\field{\mathcal{F}}
\newcommand\given{\, \vert \, }
\newcommand\matM{{\bf M}}
\newcommand\Polya{P\' olya}
\author[M.~Kuba]{Markus Kuba}
\address{Markus Kuba\\
Institute of Applied Mathematics and Natural Sciences\\
University of Applied Sciences-Technikum Wien\\
H\"ochst\"adtplatz 5, 1200 Wien} 
\email{kuba@technikum-wien.at}
\author[H.~Sulzbach]{Henning Sulzbach \textsuperscript{1}}
\address{Henning Sulzbach\\
School of Computer Science\\
McGill University \\ 3480 University Street, H3A 0E9 Montreal, QC, Canada}
\email{henning.sulzbach@gmail.com}
\title[Martingale tail sums in affine urn models]{On martingale tail sums in affine two-color urn models with multiple drawings}
\keywords{Urn model, martingale central limit theorem, law of the iterated logarithm, large-index urns, triangular urns}%
\subjclass[2000]{60F15, 60C05, 60F05, 60G42}       
\begin{document}
\begin{abstract}
In two recent works, Kuba and Mahmoud (arXiv:1503.090691 and arXiv:1509.09053) introduced the family of two-color affine balanced \Polya\ urn schemes with multiple drawings. We show that, in large-index urns (urn index between $1/2$ and $1$) and triangular urns, the martingale tail sum for the number of balls of a given color admits both a Gaussian central limit theorem as well as a law of the iterated logarithm. The laws of the iterated logarithm are new even in the standard model when only one ball is drawn from the urn in each step (except for the classical \Polya\ urn model). Finally, we prove that the martingale limits exhibit densities (bounded under suitable assumptions) and exponentially decaying tails. Applications are given in the context of node degrees in random linear recursive trees and random circuits.

\end{abstract}

\date{\today}
\maketitle
\footnotetext[1]{This work was supported by a Feodor Lynen Research Fellowship from the Alexander von Humboldt Foundation.}
\section{Introduction}
\Polya\ urn schemes are useful mathematical toy models for growth processes with a wide range of applications in several areas including the analysis of random trees, graphs and algorithms, population genetics and the spread of epidemics. For a discussion of these and further applications, we refer to 
the monographs of Johnson and Kotz \cite{JohnsonKotz1977} and Mahmoud \cite{Mah2008}.
Instances of urn models with multiple drawings were first discussed by Mahmoud and Tsukiji \cite{TsukijiMahmoud2001} in the context of random circuits. The model was then further developed in several recent contributions by Johnson, Kotz and Mahmoud \cite{JohnsonKotzMahmoud2004}, Chen and Wei \cite{ChenWei}, Renlund \cite{Renlund}, Mahmoud \cite{Mah2012}, 
Moler, Plo and Urmeneta \cite{Moler},   Chen and Kuba \cite{ChenKu2013+}, Kuba, Mahmoud and Panholzer \cite{KuMaPan2013+} and Kuba and Mahmoud \cite{KuMa201314, KuMaII201314}. 

Let us describe the details of the evolution of the two-color urn process. For convenience, we 
use the colors white and black. 
In each step, we take a sample of  $m \geq 1$ balls from the urn. Here we distinguish two scenarios: in \mom, balls are drawn without replacement; whereas, in \mor, the sample is obtained with replacement. The pick is then put back  in the urn together with a certain number of additional white and black balls 
determined as follows: given that the sample contained $k$ white and $m-k$ black balls, we add $a_{m-k}$ white and $b_{m-k}$ black balls. Here, $a_k, b_k, 0\le k\le m$ are integers, where negative values are allowed and correspond to removing balls from the urn.  
By $\matM$, we denote the ball replacement matrix of this process,  
\begin{equation}
\label{MuliDrawsLinMatrix}
    \matM =
    \begin{pmatrix}
    a_0& b_0   \\
    a_1  & b_1\\
    \vdots&\vdots\\
    a_{m-1}   & b_{m-1}\\
    a_m&  b_m  \\
    \end{pmatrix}.
\end{equation}

As usual, $W_n$ and $B_n$ describe the number
of white and black balls in the urn after $n$ draws. Further, we let $T_n = W_n + B_n$ be the total number of balls at time $n$. An urn scheme is called \myt{balanced} if, at each step, the total number of added balls $\sigma \geq 1$ is constant. In other words, $a_k+b_k=\sigma$, $0\le k\le m$.
In balanced urns, we have, almost surely, $T_n =  T_0 + \sigma n$.
We call the scheme \myt{tenable}, if, almost surely, the process of drawing balls and updating the urn configuration can be continued forever.
Throughout the work, we only consider balanced and tenable urn models. We also assume that both $W_0$ and $B_0$ are deterministic.


\subsection{Affine models}
For $m=1$, the classical analysis of the composition of \Polya\ urns is based on the observation that, for suitable real-valued sequences $\alpha_n, \beta_n, n \geq 1$, the conditional expectation of $W_n$ exhibits the following affine structure
\begin{equation}
\label{MuliDrawsLinPropLinear}
\E\bigl[W_n \given \field_{n-1}\bigr]= \alpha_n W_{n-1} +\beta_n,\qquad n\ge 1.
\end{equation}
Here, $\field_n$ 
denotes the $\sigma$-algebra generated by the first $n$ draws from the urn.
In urn schemes with multiple drawings, the conditional expectation of $W_n$ generally involves higher powers of $W_{n-1}$ which complicates the situation drastically. We believe that the analysis of the general model requires new techniques and do not approach this problem here. Following \cite{KuMa201314, KuMaII201314}, in order to maintain the familiar structure \eqref{MuliDrawsLinPropLinear} from the case $m=1$, we call an urn scheme with multiple drawings \myt{affine} if \eqref{MuliDrawsLinPropLinear} is satisfied for some deterministic sequences $\alpha_n, \beta_n, n \geq 1$. 
By \cite[Proposition 1]{KuMa201314}, balanced urn models are affine if and only if the entries in the first column of the replacement matrix $\matM$ satisfy the recurrence:
\begin{align*}
a_k=(m-k)(a_{m-1}-a_m)+a_m,\qquad \mbox {for \ } 0\le k\le m.
\end{align*}
It follows that, in affine models, the matrix $\matM$  can be expressed  only in terms of the parameters $a_{m-1}$, $a_m$, and the balance factor $\sigma$.
Similarly, $\alpha_n$ and $\beta_n$ in~\eqref{MuliDrawsLinPropLinear} can be given in terms of $a_{m-1}, a_m$ and $\sigma$ (see \cite[Proposition 1]{KuMa201314}) which we do not repeat here since $\alpha_n$ and $\beta_n$ are of no relevance in our work.

In the case $m=1$, it is well-known that the eigenvalues of the replacement matrix play an important role in the classification of different urn schemes. This transfers directly to the case of multiple drawings upon defining 
 $\Lambda_1 \geq \Lambda_2$ to be the two eigenvalues of the submatrix
$\begin{pmatrix}
a_{m-1} & b_{m-1}\\
a_m&b_m\\
\end{pmatrix}$, compare Theorem \ref{th1} below. 
We define the \myt{urn index} by 
\begin{align}  \label{def_lambda}
\Lambda := \frac{m \Lambda_2}{\Lambda_1}=\frac{m}{\sigma}(a_{m-1}-a_m) \in (-\infty, 1].
\end{align}
As for $m=1$, urn schemes can be divided in the following three fundamentally different cases:
\begin{enumerate}[i)]
\item Urn schemes with $a_m \geq 1, b_0 \geq 1$ and \myt{small} index $\Lambda \leq 1/2$, the case $\Lambda=\frac 1 2$ being critical, \item  Urn schemes with $a_m \geq 1, b_0 \geq 1$ and \myt{large} index $ \frac12 < \Lambda < 1$,
\item  \myt{Triangular} urn models with $a_m = 0$ or $b_0 = 0$ and arbitrary $0 < \Lambda\le 1$. 
The special case $\Lambda=1$ corresponds to the so-called \myt{generalized} P\'olya urn model introduced in \cite{ChenWei, ChenKu2013+}. Here, $a_m= b_0 = 0$.
\end{enumerate}

In the context of triangular urn models, the relation $B_n=T_n-W_n$ allows us to restrict ourselves to urns with $a_m=0$ and $b_0\ge 0$. For small-index urns, we always exclude the case $\Lambda = 0$ from the results since it implies a deterministic evolution of the urn composition. Similarly, in triangular urns, we always assume $W_0 \geq 1$, and, additionally, $B_0 \geq 1$  if $\Lambda = 1$.




\subsection{Known results.} The main results of the two works \cite{KuMa201314, KuMaII201314} can be summarized in the following theorem which is well-known and classical for $m=1$. Here, and throughout the work, we abbreviate \begin{align} \label{def:zeta} \zeta = \frac{a_m}{1-\Lambda}, \quad
Q = \frac{\Gamma(\frac{T_0}\sigma+\Lambda)}{\Gamma(\frac{T_0}\sigma)}. \end{align}
Note that, tenability of the scheme depends on the matrix $\matM$, the initial configuration $(W_0, B_0)$ and the model under consideration. For a discussion, see Lemma \ref{lem:ten} in Section \ref{Sec:ten}. 
\begin{theorem} \label{th1}
Let $W_n$ be the number of white balls at time $n$ in an affine balanced and tenable two-color urn model (\mom\ or \mor) with replacement matrix $\matM$ given in \eqref{MuliDrawsLinMatrix} and fixed initial configuration $(W_0, B_0)$.
\begin{enumerate}[i)]
\item For small-index urns with $0 \neq \Lambda  \leq 1/2$\footnote{In \cite[Theorem 3]{KuMa201314}, the authors impose the additional condition $T_0 + m(a_{m-1} - a_m) > 0$. It is not hard to see that the result holds without making this assumption. In fact, the relevant argument is stated on page 5 in \cite{KuMa201314}.}, we have, in distribution, $$ \frac {W_n - \zeta n}{\sqrt{ n \ell_n} } \to  \sN(0, \gamma_1^2), \quad \ell_n = 
\begin{cases}
1 & \text{if} \: \Lambda < 1/2,\\
\log n
& \text{if} \: \Lambda = 1/2.
\end{cases} $$
Here, $\sN(0,\gamma_1^2)$ denotes a zero-mean normal random variable with variance $\gamma_1^2 > 0$ where
\begin{align} \label{def:gam} \gamma_1 = 
\begin{cases}
\frac{\Lambda}{1 - \Lambda} \sqrt{\frac{a_m b_0}{m(1 - 2 \Lambda)}} & \text{if} \: \Lambda < 1/2,\\
 \sqrt{\frac{a_m b_0}{m}}
& \text{if} \: \Lambda = 1/2. 
\end{cases}  \end{align}

\item For large-index urns with $1/2 < \Lambda < 1$, we have, almost surely, $$ \frac{Q\cdot(W_n - \zeta n)}{n^{\Lambda}} \to \sW_\infty, \quad \E[\sW_\infty] = 0.$$
\item For triangular urns with $0 < \Lambda \leq 1$ and $a_m = 0$, we have, almost surely, $$ \frac {Q\cdot W_n}{n^{\Lambda}} \to \fW_\infty, \quad \E[\fW_\infty] = W_0.$$
\end{enumerate}
The convergence for large-index and triangular urns holds with respect to all moments and the random variables $\sW_\infty, \fW_\infty$ are not almost surely constant.
\end{theorem}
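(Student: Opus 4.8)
The plan is to derive all three parts from a single martingale construction. By the affine property~\eqref{MuliDrawsLinPropLinear} and the explicit formulas for $\alpha_n,\beta_n$ recalled from \cite[Proposition~1]{KuMa201314}, one has $\alpha_n>0$ with $\alpha_n = 1 + \Lambda/n + O(n^{-2})$ and $\beta_n$ constant, so there is an invariant linear sequence $L_n = \zeta n + \kappa$ (with $\kappa$ a constant), i.e.\ $L_n = \alpha_n L_{n-1} + \beta_n$, reflecting that $\E[W_n]$ has leading term $\zeta n$. Setting $c_n := \prod_{k=1}^n \alpha_k$, one checks at once that
\[
M_n := \frac{W_n - L_n}{c_n}, \qquad n \ge 0,
\]
is a martingale with respect to $(\field_n)_{n\ge 0}$; the product $c_n$ telescopes to a ratio of Gamma functions, whence $c_n \sim Q^{-1} n^{\Lambda}$ by Stirling, with $Q$ as in~\eqref{def:zeta}. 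Thus $Q(W_n - \zeta n)/n^{\Lambda}$ and $M_n$ differ by a factor tending to $1$ plus an additive term of order $n^{-\Lambda}$, so it suffices to analyze $M_n$. In the triangular case $a_m = 0$ forces $\zeta = 0$ and $L_n \equiv 0$, so $M_n = W_n/c_n$; moreover the affine relations give $a_k = (m-k)a_{m-1} \ge 0$, hence $W_n$ is nondecreasing and $M_n$ is a \emph{nonnegative} martingale.

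For parts~(ii) and~(iii) I would start from the increment bound: the urn being balanced with bounded replacement entries, $|W_n - W_{n-1}| \le R$ for a constant $R$, so $|\Delta M_n| = c_n^{-1}\,|W_n - \E[W_n \mid \field_{n-1}]| \le 2R/c_n = O(n^{-\Lambda})$. For large-index urns $\Lambda > 1/2$, hence $\sum_n \E[(\Delta M_n)^2] \le C\sum_n n^{-2\Lambda} < \infty$, so $M_n$ is $L^2$-bounded and converges almost surely and in $L^2$; since in fact $\sum_n (\Delta M_n)^2 \le R^2 \sum_n c_n^{-2}$ is bounded by a deterministic constant, the Burkholder--Davis--Gundy inequality gives $\E[\sup_n |M_n|^p] < \infty$ for every $p$, hence convergence in all moments to a limit $\sW_\infty$. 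For triangular urns $M_n \ge 0$ converges almost surely by the martingale convergence theorem regardless of $\Lambda$, and for the moments I would compute the factorial moments $\E[(W_n)_s]$ directly: the affine relations turn their recursions into linear ones with rational coefficients, whose Gamma-function solutions show $\E[M_n^s]$ tends to a finite limit, giving uniform integrability and convergence in all moments to $\fW_\infty$. Taking expectations in the martingale identity determines the means $\E[\sW_\infty]$ and $\E[\fW_\infty]=W_0$, and comparing the (computable) limit of $\E[M_n^2]$ with the square of the limiting mean---using that the one-step conditional variance of $W_n$ is strictly positive, which follows from $a_m b_0 > 0$ in the large-index case and from $W_0 \ge 1$, $a_{m-1} > 0$ in the triangular case---shows that $\sW_\infty$ and $\fW_\infty$ are not almost surely constant.

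For part~(i) the martingale $M_n$ is no longer $L^2$-bounded; rather $s_n^2 := \sum_{k\le n} \E[(\Delta M_k)^2] \asymp \sum_{k\le n} c_k^{-2}$, which is of order $n^{1-2\Lambda}$ when $\Lambda < 1/2$ and of order $\log n$ when $\Lambda = 1/2$---precisely the scale $\ell_n$ in the statement, since $c_n \asymp n^{\Lambda}$. The plan is to apply the martingale central limit theorem to $M_n/s_n$. The conditional Lindeberg condition is immediate because $\max_{k\le n} |\Delta M_k| \le 2R/c_1$ is bounded while $s_n \to \infty$; the substantive point is $s_n^{-2}\sum_{k\le n} \E[(\Delta M_k)^2 \mid \field_{k-1}] \inprob 1$. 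Writing $\E[(\Delta M_k)^2 \mid \field_{k-1}] = c_k^{-2}\,\V(W_k \mid \field_{k-1})$ and using $W_k - W_{k-1} = K(a_{m-1}-a_m) + a_m$, where $K$ is the number of white balls in the $k$-th sample, one gets $\V(W_k \mid \field_{k-1}) = (a_{m-1}-a_m)^2\,\V(K \mid \field_{k-1})$; this is the only place $\mom$ and $\mor$ differ, through hypergeometric versus binomial sampling, but the discrepancy is $O(1/T_{k-1})$ and both versions converge almost surely to the same constant $v_\infty = \Lambda^2 a_m b_0 / (m(1-\Lambda)^2)$ once one knows the law of large numbers $W_k/T_k \to \zeta/\sigma$ almost surely---which follows from $W_k - L_k = c_k M_k = o(k)$, using $c_k \asymp k^{\Lambda}$ with $\Lambda < 1$ and a crude sub-polynomial bound on $M_k$. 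Since $s_n^2 \sim v_\infty \sum_{k\le n} c_k^{-2}$ as well, the normalized conditional variance sum tends to $1$, the martingale CLT gives $M_n/s_n \claw \sN(0,1)$, and translating back via $W_n - \zeta n = c_n M_n + O(1)$ together with the asymptotics of $c_n$ and $s_n$ yields the Gaussian limit with variance $\gamma_1^2$ as in~\eqref{def:gam}.

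I expect the main obstacle to be the exact evaluation of the limiting conditional variance $v_\infty$ (equivalently of $\gamma_1$), which needs a careful, model-dependent treatment of the sampling step and of the law of large numbers feeding into it, together with the bookkeeping in the factorial-moment recursions underlying the moment and non-degeneracy claims in~(ii) and~(iii). Once the martingale $M_n$ and the bound $|\Delta M_n| = O(n^{-\Lambda})$ are in hand, the remaining steps are routine.
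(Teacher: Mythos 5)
Your proposal is correct and follows essentially the same route as the proofs this theorem summarizes from \cite{KuMa201314, KuMaII201314} (the present paper only cites the result and does not reprove it): your martingale $M_n=(W_n-L_n)/c_n$ coincides, up to a deterministic shift, with $g_n(W_n-\E[W_n])$ (resp.\ $g_nW_n$) recalled in Section~4, and the tools --- $L_2$-boundedness for $\Lambda>1/2$, nonnegative-martingale convergence plus linear moment recursions for triangular urns, and the martingale CLT fed by the almost sure law of large numbers $W_k/T_k\to\zeta/\sigma$ for small-index urns --- are exactly those used there. The only slips are cosmetic: for $\Lambda<0$ the increments $|\Delta M_k|\le 2R/c_k$ grow like $k^{-\Lambda}$ rather than being uniformly bounded (the Lindeberg condition still holds since this is $o(s_n)$), and your centering at $L_n=\zeta(n+T_0/\sigma)$ yields a limit with mean $W_0-\zeta T_0/\sigma=Q\vartheta$ rather than $0$, reflecting a harmless discrepancy already present between the theorem's normalization $\zeta n$ and the centering at $\E[W_n]$ used in \eqref{eta}.
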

We shortly discuss the theorem in the classical case $m=1$. 
The central limit theorems for small-index and large-index balanced urns go back to Athreya and Karlin \cite{AtKa68} under the assumption that $a_0, b_1 \geq -1$, the important case of Friedman's urn had earlier been solved earlier by Freedman \cite{Freedman}.
Bagchi and Pal \cite{Bagchi1985} proved the Gaussian central limit theorem in small-index urns in the general case using the method of moments. By similar techniques as adopted in this work, Gouet \cite{Gouet93} showed functional central limit theorems covering the statement for small- and large-index urns as well as for triangular urns.
Variants of Theorem \ref{th1} \emph{i), ii)} have been obtained based on substantially different techniques: Flajolet, Gabarr\'{o} and Pekari \cite{FlaGabPek2005} and Flajolet, Dumas and Puyhaubert \cite{FlaDumPuy2006} used singularity analysis, Pouyanne \cite{Pou2008} applied purely algebraic methods in the context of large-index urns, and, very recently, Neininger and Knape \cite{NeiningerKnape} worked out an approach based on the contraction method. Janson's comprehensive work \cite{Jan2004} based on a strengthening of the ideas in \cite{AtKa68} also treats certain non-balanced urn models and contains an elaborate summary of works in the context of Theorem \ref{th1} \emph{i), ii)}.
Properties of the law of the martingale limit $\sW_\infty$ such as characteristic functions, densities, moments and characterizing stochastic fixed-point equations were studied by Chauvin, Pouyanne and Sahnoun \cite{Chauvin1}, Neininger and Knape \cite{NeiningerKnape}, as well as by Chauvin, Pouyanne and Mailler \cite{Chauvin2}. 
Similarly, in triangular urn schemes, the law of $\fW_\infty$ was studied by Janson \cite{Jan2006, Jan2010} and in \cite{FlaGabPek2005}. Note that, \cite{Jan2006} contains a full characterization of the limiting distributions in triangular urns covering all cases of zero-balanced and unbalanced schemes. 
For more references and results on large deviations and convergence rates, we refer to the discussions in the literature cited.

For general $m \geq 1$, explicit expressions for the (positive integer) moments of the non-normal limits in large-index and triangular urns, that is $\fW_\infty$ and $\sW_\infty$ in Theorem \ref{th1}, have been obtained in~\cite{KuMaII201314}. It is important to note that the structure of higher moments for multiple drawings $m>1$ is significantly more involved compared to the case $m=1$ where simplifications occur
(see also the discussion in \cite{ChenKu2013+}). Note that the main results of this work show that these limiting distributions have exponentially small tails. 

\subsection{Aim of the paper.} For a martingale $(Y_n)$ converging almost surely to a random variable $Y_\infty$, the sequence $(Y_n - Y_\infty)$ is called \myt{martingale tail sum}.
A classical result for urn models is the central limit theorem and the law of the iterated logarithm by Heyde~\cite{Heyde1977} for the martingale tail sum in the original \Polya\ urn model with sample size $m=1$. Again, for $m=1$, central limit theorems for the tail sums in balanced small- and large-index urns as well as in triangular urns are contained in the functional  limit theorems in \cite{Gouet93}. A corresponding law of the iterated logarithm for small-index urns was given by Bai, Hu and Zhang \cite{BaiHuZhang}. 
A classical result for urn models is the central limit theorem and the law of the iterated logarithm by Heyde~\cite{Heyde1977} for the martingale tail sum in the original \Polya\ urn model with sample size $m=1$. Again, for $m=1$, central limit theorems for the tail sums in balanced small- and large-index urns as well as in triangular urns are contained in the functional  limit theorems in \cite{Gouet93}. A corresponding law of the iterated logarithm for small-index urns was given by Bai, Hu and Zhang \cite{BaiHuZhang}. 
Recently, several articles analyzing related random discrete structures have been devoted to martingale tails sums: M\'ori~\cite{Mori2005} obtained a central limit theorem for maximum degree of the plane-oriented recursive trees. Neininger~\cite{Neininger2015} proved a central limit theorem for the martingale tail sum of R{\'e}gnier's martingale for the path length in random binary search trees. Fuchs \cite{Fuchs2015} reproved this result using the method of moments; a refinement of this result is given by Gr{\"u}bel and Kabluchko~\cite{GrueKab2014}. Sulzbach~\cite{Sulzbach2015} generalized the result for binary search trees to a family of increasing trees containing amongst others binary search trees, recursive trees and plane-oriented recursive trees, also obtaining a law of the iterated logarithm.

In this work we derive central limit theorems and laws of the iterated logarithm for the martingale tail sums arising in large-index and triangular affine urn models for general $m \geq 1$. Note that, even for $m=1$, the laws of the iterated logarithm are new except for the case $\Lambda = 1$. 
We also extend the results in \cite{ChenWei,ChenKu2013+} to show that the martingale limits in all models admit densities and exponentially small tails. Throughout the work, we exclusively use tools from discrete-time martingale theory as summarized in Sections \ref{Sec:PrelimTailSums} and \ref{sec:azuma}. For $m=1$, the urn model allows for a continuous-time multi-type branching process embedding  \cite{AtKa68, Jan2004} as well as for recursive distributional decompositions \cite{NeiningerKnape, Chauvin2} leading to much stronger results about the limiting random variables. Since these techniques seem not to be directly applicable for $m > 1$, we leave it as an open problem to derive more precise information about the limit laws such as infinite divisibility, smoothness of densities, unbounded support or characterizations based on stochastic-fixed point equations. 

\subsection{Notation}
We denote by $\fallfak{x}{k}$ the $k$th falling factorial, $x(x-1)\dots (x-k+1)$, $k\ge 0$, with $\fallfak{x}0=1$. 
For real-valued sequences $a_n, b_n$, we write $a_n \sim b_n$ if $a_n / b_n \to 1$ as $n \to \infty$ (almost surely, if the sequences are random). Further, we use the big-$\Gro$ Landau notation for sequences as $n \to \infty$. 
By $\mathcal N$ we denote a standard normal random variable. We call a random variable $X$ \myt{Subgaussian} if there exist $c, C > 0$, such that,
$$\P(|X| \geq t) \leq C e^{-ct^2}, \quad t > 0.$$
By $\mathbf{bin}(n,p)$ we denote the binomial distribution with parameters $n \in \N$ (number of trials) and $p \in [0,1]$ (success probability). Similarly, 
we use $\mathbf{hyp} (N, K, m)$ to denote the hypergeometric distribution counting the number of white balls in a sample of size~$m \in \N$ balls 
taken from urn containing $N \geq m$ balls, $m \leq K \leq N$ among them being white.

\section{Results} \label{sec:results}
Our main results concern the asymptotic behavior of $W_n$ in large-index urns and triangular urns. Our results cover both  \mom\ and \mor. We refer to Lemma \ref{Prop:caligraphicW} for an explicit formula for $\E[W_n]$. Let 
\begin{equation}
\label{ExpansionGn}
g_n=\frac{\binom{n-1+\frac{T_0}\sigma}{n}}{\binom{n-1+\frac{T_0}\sigma +\Lambda}{n}}= Q n^{-\Lambda}\Big(1+O\Bigl(\frac1{n}\Bigr)\Bigr),\quad n\ge 1,
\end{equation}
with $Q$ given in~\eqref{def:zeta} and $\Lambda$ given in \eqref{def_lambda}.

\begin{theorem}[Large-index urns] \label{thm:large}
Let $W_n$ be the number of white balls at time $n$ in a large-index urn with $1/2 < \Lambda < 1$. Then, $\mathcal W_n := g_n (W_n - \E[W_n])$ is an almost surely convergent martingale. Its limit $\mathcal W_\infty$ is Subgaussian and admits a bounded density on $(-\infty,\infty)$.
In distribution and with convergence of all moments, 
\begin{align} \alpha n^{\Lambda - 1/2} ( \mathcal W_n - \sW_\infty) \to \mathcal N, \quad \alpha = \frac{(1-\Lambda)}{Q\Lambda} \sqrt{\frac{m(2\Lambda-1)}{a_m b_0}}. \label{alpha} \end{align} Almost surely, 
\begin{align*}
\limsup_{n \to \infty} \frac{\alpha n^{\Lambda - 1/2}(\sW_n - \sW_\infty)}{ \sqrt{2 \log \log n}}   = 1, \quad \liminf_{n \to \infty}\frac{\alpha n^{\Lambda - 1/2}(\sW_n - \sW_\infty)}{ \sqrt{2 \log \log n}}   =  -1. 
\end{align*}
\end{theorem}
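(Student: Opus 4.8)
The plan is to reduce both the central limit theorem and the law of the iterated logarithm to a single almost-sure estimate — the asymptotics of the predictable quadratic variation of the martingale tail sum — and to handle the Subgaussianity and the bounded density by separate, more robust arguments.

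\emph{Martingale property, $L^2$-convergence, Subgaussianity.} With $\mathcal W_n = g_n(W_n-\E W_n)$, the affine recursion \eqref{MuliDrawsLinPropLinear} together with $\E W_n = \alpha_n\E W_{n-1}+\beta_n$ gives $\E[\mathcal W_n-\mathcal W_{n-1}\mid\field_{n-1}] = (g_n\alpha_n-g_{n-1})(W_{n-1}-\E W_{n-1})$, and the sequence $g_n$ of \eqref{ExpansionGn} is (with $g_0=1$) exactly the one solving $g_n\alpha_n=g_{n-1}$; hence $(\mathcal W_n)$ is a martingale with $\mathcal W_0=0$ and increments $\Delta_n := \mathcal W_n-\mathcal W_{n-1} = g_n\bigl(W_n-\E[W_n\mid\field_{n-1}]\bigr) = g_n(a_{m-1}-a_m)\bigl(J_n-mW_{n-1}/T_{n-1}\bigr)$, where $J_n$ is the number of white balls in the $n$th draw. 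In particular $|\Delta_n|\le C g_n = O(n^{-\Lambda})$ with $C := m|a_{m-1}-a_m|$. Since $\Lambda>1/2$ the squares $g_n^2\sim Q^2 n^{-2\Lambda}$ are summable, so by orthogonality of martingale increments $\sup_n\E[\mathcal W_n^2]\le C^2\sum_n g_n^2<\infty$; thus $(\mathcal W_n)$ converges a.s.\ and in $L^2$ to $\mathcal W_\infty$ with $\E\mathcal W_\infty=0$ (alternatively, $L^2$-boundedness follows from Theorem~\ref{th1}~ii) and Lemma~\ref{Prop:caligraphicW}). Applying the Azuma--Hoeffding inequality of Section~\ref{sec:azuma} to the maximal function of $(\mathcal W_n)$ with increment bounds $C g_k$ and letting $n\to\infty$ gives $\P\bigl(\sup_n|\mathcal W_n|\ge t\bigr)\le 2\exp\bigl(-t^2/(2C^2\sum_k g_k^2)\bigr)$; in particular $\mathcal W_\infty$ is Subgaussian.

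\emph{Central limit theorem, moments, law of the iterated logarithm.} Work with the tail sum $R_n := \mathcal W_\infty-\mathcal W_n = \sum_{k>n}\Delta_k$ and $V_n := \sum_{k>n}\E[\Delta_k^2\mid\field_{k-1}]$. From the formula for $\Delta_k$, $\E[\Delta_k^2\mid\field_{k-1}] = g_k^2(a_{m-1}-a_m)^2\operatorname{Var}(J_k\mid\field_{k-1})$, and $\operatorname{Var}(J_k\mid\field_{k-1})$ equals $mp_{k-1}(1-p_{k-1})$ in \mor\ and $mp_{k-1}(1-p_{k-1})\tfrac{T_{k-1}-m}{T_{k-1}-1}$ in \mom, where $p_{k-1} := W_{k-1}/T_{k-1}$. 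Since $W_{k-1}/k\to\zeta$ a.s.\ by Theorem~\ref{th1}~ii) and $T_{k-1}=T_0+\sigma(k-1)$, we have $p_{k-1}\to\zeta/\sigma$ a.s.; combining this with $g_k\sim Qk^{-\Lambda}$, $a_{m-1}-a_m=\Lambda\sigma/m$, $\zeta=a_m/(1-\Lambda)$ and $\sigma-\zeta=b_0/(1-\Lambda)$ yields $\E[\Delta_k^2\mid\field_{k-1}]\sim\tfrac{Q^2\Lambda^2 a_m b_0}{m(1-\Lambda)^2}k^{-2\Lambda}$ a.s., so $V_n\sim\alpha^{-2}n^{1-2\Lambda}$ a.s., i.e.\ $\vartheta_n^2 V_n\to1$ with $\vartheta_n := \alpha n^{\Lambda-1/2}$. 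Now the martingale-tail-sum central limit theorem of Section~\ref{Sec:PrelimTailSums} applies: the conditional Lindeberg condition holds trivially since $\vartheta_n|\Delta_k| = O(n^{\Lambda-1/2}k^{-\Lambda}) = O(n^{-1/2})$ uniformly for $k>n$, whence $\vartheta_n R_n\to\mathcal N$, equivalently $\alpha n^{\Lambda-1/2}(\mathcal W_n-\mathcal W_\infty)\to\mathcal N$ by symmetry of $\mathcal N$; and convergence of all moments follows by uniform integrability, because the Burkholder--Davis--Gundy inequality gives $\E[R_n^{2p}]\le C_p\E\bigl[\bigl(\sum_{k>n}\Delta_k^2\bigr)^p\bigr]\le C_p\bigl(C^2\sum_{k>n}g_k^2\bigr)^p = O(n^{p(1-2\Lambda)})$, so $\E[(\vartheta_n R_n)^{2p}]=O(1)$. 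For the law of the iterated logarithm we invoke the martingale-tail-sum law of the iterated logarithm of Section~\ref{Sec:PrelimTailSums}, whose hypotheses are readily checked from $V_n\sim\alpha^{-2}n^{1-2\Lambda}\to0$ and $\max_{k>n}|\Delta_k|=O(n^{-\Lambda})=o\bigl(\sqrt{V_n/\log\log(1/V_n)}\bigr)$; it yields $\limsup_n R_n/\sqrt{2V_n\log\log(1/V_n)}=1$ and $\liminf_n R_n/\sqrt{2V_n\log\log(1/V_n)} = -1$ a.s., and substituting $V_n\sim\alpha^{-2}n^{1-2\Lambda}$, $\log\log(1/V_n)\sim\log\log n$ and $\mathcal W_n-\mathcal W_\infty=-R_n$ produces the two displayed identities.

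\emph{Bounded density.} This is the most delicate step. Fix a small $\delta_0>0$; since $p_{k-1}\to\zeta/\sigma\in(0,1)$ a.s., the events $G_{n_0} := \{p_{k-1}\in(\delta_0,1-\delta_0)\text{ for all }k>n_0\}$ satisfy $\P(G_{n_0}^c)\to0$, and in fact $\P(G_{n_0}^c)\le 2\exp(-c\,n_0^{2(1-\Lambda)})$ for large $n_0$, using $W_{k-1}-\zeta(k-1)=O\bigl(k^\Lambda(\sup_m|\mathcal W_m|+1)\bigr)$ together with the Azuma bound above. On $G_{n_0}$, for each $k>n_0$ the conditional law of $\Delta_k$ given $\field_{k-1}$ is supported on the arithmetic progression $g_k(a_{m-1}-a_m)\bigl(\{0,\dots,m\}-mp_{k-1}\bigr)$, of span $\asymp k^{-\Lambda}$, with every atom of mass at most $1-\delta_1$ for some $\delta_1=\delta_1(\delta_0,m)\in(0,1)$; hence the increments with $n_0<k\le\varepsilon^{-1/\Lambda}$ each contribute a fixed amount to the anti-concentration budget. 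A conditional Kolmogorov--Rogozin-type concentration estimate for the martingale differences $(\Delta_k)_{k>n_0}$ then bounds the conditional Lévy concentration function of $R_{n_0}$ given $\field_{n_0}$ by $O\bigl(\varepsilon\cdot\bigl(\#\{n_0<k\le\varepsilon^{-1/\Lambda}\}\bigr)^{-1/2}\bigr)$ on $G_{n_0}$; since $\mathcal W_\infty=\mathcal W_{n_0}+R_{n_0}$ with $\mathcal W_{n_0}$ being $\field_{n_0}$-measurable, choosing $n_0=n_0(\varepsilon)\to\infty$ slowly enough (possible by the super-polynomial decay of $\P(G_{n_0}^c)$) that $\P(G_{n_0(\varepsilon)}^c)\le\varepsilon$ and $n_0(\varepsilon)=o(\varepsilon^{-1/\Lambda})$ gives $\sup_x\P(\mathcal W_\infty\in[x,x+\varepsilon]) = O(\varepsilon^{1+1/(2\Lambda)})+O(\varepsilon)=O(\varepsilon)$, uniformly in $x$, whence $\mathcal W_\infty$ has a bounded density on $\R$. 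The main obstacle is precisely this passage from the classical independent-increment picture to the martingale setting; an equivalent route, with the same difficulty, is to prove directly that $\int_\R|\E e^{it\mathcal W_\infty}|\,dt<\infty$ by conditioning on $\field_n$ with $n\asymp|t|^{1/\Lambda}$ and bounding the conditional characteristic function of the tail by $\exp(-c\,t^2 V_n)\le\exp(-c'|t|^{1/\Lambda})$ up to controlled errors stemming from the non-Gaussianity of the individual increments.
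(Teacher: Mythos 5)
Your treatment of the martingale property, the $L_2$-convergence, the Subgaussianity via Azuma, and the central limit theorem and law of the iterated logarithm via the tail-sum machinery of Section \ref{Sec:PrelimTailSums} is essentially the paper's own route: you compute $\E[X_{k}^2\mid\field_{k-1}]\sim Q^2\Lambda^2 a_mb_0\,m^{-1}(1-\Lambda)^{-2}k^{-2\Lambda}$ (this is Lemma \ref{lem:exp}), note that the deterministic bound $|X_k|=\Gro(k^{-\Lambda})$ kills the Lindeberg-type conditions \eqref{con2}, \textbf{L1}, \textbf{L2}, \textbf{P2}, and conclude from Proposition \ref{prop:heyde} with $\eta=1$. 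Your use of Burkholder--Davis--Gundy for the moment convergence is a harmless variant of the paper's verification of \textbf{P1}--\textbf{P3} via Hall's theorem. All of this is correct.

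The bounded density, however, is a genuine gap, and you have in effect flagged it yourself. Two problems. First, the "conditional Kolmogorov--Rogozin-type concentration estimate for the martingale differences $(\Delta_k)_{k>n_0}$" is not an available tool: Kolmogorov--Rogozin is an inequality for sums of \emph{independent} random variables, and here the conditional law of $\Delta_k$ (through $p_{k-1}=W_{k-1}/T_{k-1}$) depends on the whole past, so the increments can a priori conspire; transferring anti-concentration to this dependent setting is exactly the hard step, and asserting it does not prove it. Second, even granting independence, your bound is quantitatively wrong: applying Rogozin's inequality at scale $L=\varepsilon$ with $\lambda_k=\varepsilon$ gives $Q(R_{n_0};\varepsilon)\le C\varepsilon\bigl(\varepsilon^2\delta_1 N\bigr)^{-1/2}=C(\delta_1 N)^{-1/2}$ -- the factor $\varepsilon$ cancels -- so with $N\asymp\varepsilon^{-1/\Lambda}$ you obtain only $Q(R_{n_0};\varepsilon)=\Gro(\varepsilon^{1/(2\Lambda)})$, and since $1/(2\Lambda)<1$ for $\Lambda>1/2$ this yields H\"older continuity of the distribution function, not a (bounded) density. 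The paper avoids anti-concentration for dependent sums entirely: it bounds the maximal atom $p_n=\max_k\P(W_n=W_0+na_m+hk)$ recursively by $p_n\le\prod_{i\le n}\max_k s^*(k,i)$, where $s^*(k,i)$ sums the one-step transition probabilities into a given state, and proves the exact expansion $\max_k s^*(k,n)=1-\Lambda/n+\Gro(n^{-2})$ -- the key point being that $s(k,n)$ is \emph{independent of $k$} for $k\ge m$, which follows from the finite-difference identity \eqref{nth-Difference}. This gives $p_n=\Gro(n^{-\Lambda})$, and Lemma \ref{lem:density} then converts the atom bound into a bounded density for $\sW_\infty$. To complete your proof you would need either to supply this combinatorial argument or to prove a genuine martingale anti-concentration estimate strong enough to beat the exponent $1/(2\Lambda)$; your Fourier-analytic alternative (bounding $|\E[e^{it\sW_\infty}]|$ by $\exp(-c|t|^{1/\Lambda})$) has the right quantitative shape for independent increments but faces the same dependence obstacle.
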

\begin{theorem}[Triangular urns] \label{thm:tri}
Let $W_n$ be the number of white balls at time $n$ in a triangular urn with $ \Lambda \leq 1$ and $a_m = 0$. Then, $\fW_n := g_n W_n$  is an almost surely convergent martingale. Its limit  $\fW_\infty$ admits a density on $(0,\infty)$ which is bounded if $W_0 \geq a_{m-1}, \Lambda < 1$ or $W_0, B_0 \geq a_{m-1}, \Lambda = 1$. For $\Lambda > 1/2$, it is Subgaussian, where $\sW_\infty \leq T_0$ for $\Lambda = 1$. For $\Lambda \leq 1/2$, it has a finite momentum-generating function in some non-empty open interval containing zero.
In distribution (and with convergence of all moments in the second display for $\Lambda > 1/2$) , 
\begin{align} \label{gen} \beta \eta n^{\Lambda/2} ( \fW_n - \fW_\infty) \to \mathcal N, \quad \beta  n^{\Lambda/2} ( \fW_n - \fW_\infty) \to (\eta')^{-1} \sN, \end{align}
where $\eta', \sN$ are independent and $\eta'$ is distributed like $\eta$ given in \eqref{defx}. 
Almost surely, 
\begin{align*}
\limsup_{n \to \infty} \frac{\beta \eta n^{\Lambda/2}(\fW_n - \fW_\infty)}{ \sqrt{2 \log \log n}}   = 1, \quad \liminf_{n \to \infty}  \frac{\beta \eta n^{\Lambda/2}(\fW_n - \fW_\infty)}{ \sqrt{2 \log \log n}}    =  -1. 
\end{align*}
Here, 
\begin{align} \label{defx} \beta = 
\begin{cases}
\sqrt { \frac{W_0}{a_{m-1}}} & \text{if} \: \Lambda < 1,\\
\sqrt{m \E[\fW_\infty(T_0 - \fW_\infty)] }
& \text{if} \: \Lambda = 1.
\end{cases} \quad \eta = 
\begin{cases}
\fW_\infty^{-1/2} & \text{if} \: \Lambda < 1,\\
(\fW_\infty(T_0 - \fW_\infty))^{-1/2}
& \text{if} \: \Lambda = 1.
\end{cases}  \end{align}

\end{theorem}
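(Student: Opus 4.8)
\medskip

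\noindent\textbf{Proof plan.}\quad The plan is to derive every assertion from discrete-time martingale theory applied to $(\fW_n)$ and to its tail sum. First I would note that the martingale property of $\fW_n=g_nW_n$ is immediate from the affine recursion \eqref{MuliDrawsLinPropLinear}: because $a_m=0$ and the scheme is affine, $a_{m-k}=k\,a_{m-1}$, so $W_n-W_{n-1}=a_{m-1}K_n$ where $K_n\in\{0,\dots,m\}$ is the number of white balls in the $n$-th sample and $\E[K_n\mid\field_{n-1}]=mW_{n-1}/T_{n-1}$; the definition of $g_n$ is precisely what makes $\E[\fW_n\mid\field_{n-1}]=\fW_{n-1}$. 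The martingale is $L^2$-bounded for every $0<\Lambda\le1$: using $\E[W_k]\asymp k^{\Lambda}$ (Lemma \ref{Prop:caligraphicW}) and $g_k\asymp k^{-\Lambda}$ one checks $\sum_k\E[\Delta_k^2]<\infty$, so $\fW_n$ converges almost surely and in $L^2$ (and in every $L^p$, by the moment control behind Theorem \ref{th1}). The increments $\Delta_k:=\fW_k-\fW_{k-1}=g_k\,a_{m-1}(K_k-\E[K_k\mid\field_{k-1}])$ satisfy the deterministic bound $|\Delta_k|\le a_{m-1}m\,g_k=O(k^{-\Lambda})$, and the tail sum is $\fW_\infty-\fW_n=\sum_{k>n}\Delta_k$.

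The heart of the matter is the almost-sure asymptotics of the conditional variances $s_k^2:=\E[\Delta_k^2\mid\field_{k-1}]=g_k^2a_{m-1}^2\operatorname{Var}(K_k\mid\field_{k-1})$. In \mor, $\operatorname{Var}(K_k\mid\field_{k-1})=m\frac{W_{k-1}}{T_{k-1}}\bigl(1-\frac{W_{k-1}}{T_{k-1}}\bigr)$, and in \mom\ the hypergeometric variance differs only by a factor $\frac{T_{k-1}-m}{T_{k-1}-1}\to1$; combining this with $g_{k-1}W_{k-1}\to\fW_\infty$ and $T_{k-1}\sim\sigma k$ from Theorem \ref{th1} gives $s_k^2\sim c\,\fW_\infty\,k^{-\Lambda-1}$ when $\Lambda<1$ (there $W_{k-1}/T_{k-1}\to0$) and, using $\fW_n=g_nW_n=T_0W_n/T_n$ and $B_n=T_n-W_n$, $s_k^2\sim c'\,\fW_\infty(T_0-\fW_\infty)\,k^{-\Lambda-1}$ when $\Lambda=1$. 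Summation yields $n^{\Lambda}V_n\to V$ almost surely, where $V_n:=\sum_{k>n}s_k^2$ and $V$ is a positive $\field_\infty$-measurable random variable; identifying its constant is what produces $\beta,\eta$ of \eqref{defx}, so that $V=(\beta\eta)^{-2}$. The conditional Lindeberg condition is trivial since $|\Delta_k|=O(k^{-\Lambda})=o(V_n^{1/2})$ uniformly over $k>n$. Hence the martingale central limit theorem for tail sums of Section \ref{Sec:PrelimTailSums} gives the stable convergence $V_n^{-1/2}(\fW_\infty-\fW_n)\to\mathcal N$ with $\mathcal N$ independent of $\field_\infty$; multiplying by $\sqrt{n^\Lambda V_n}\to\sqrt V$ yields $n^{\Lambda/2}(\fW_\infty-\fW_n)\to\sqrt V\,\mathcal N$, which unpacks into both displays of \eqref{gen} (the first upon a further division by $\sqrt{n^\Lambda V_n}$, the second by reading $\sqrt V\,\mathcal N$ as a variance mixture, $V$ being independent of $\mathcal N$ in law). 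The law of the iterated logarithm follows from the martingale LIL of the same section: besides $n^\Lambda V_n\to V$ one needs a mild smallness condition on $\Delta_k$ relative to $\sqrt{V_k/\log\log V_k^{-1}}$, once more immediate from $|\Delta_k|=O(k^{-\Lambda})$ and $V_k\asymp k^{-\Lambda}$. This gives $\limsup(\fW_\infty-\fW_n)/\sqrt{2V_n\log\log V_n^{-1}}=1$ a.s.; substituting $V_n\sim V n^{-\Lambda}$, $\log\log V_n^{-1}\sim\log\log n$ and using symmetry produces the stated $\pm1$ limits.

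For the tail estimates the case $\Lambda=1$ is immediate: $g_n=T_0/T_n$ forces $\fW_n=T_0W_n/T_n\le T_0$, so $\fW_\infty\le T_0$ is bounded, hence Subgaussian. For $1/2<\Lambda<1$ the deterministic bound $|\Delta_k|\le a_{m-1}m\,g_k$ together with $\sum_kg_k^2<\infty$ makes the Azuma--Hoeffding inequality of Section \ref{sec:azuma} yield $\P(|\fW_n-W_0|\ge t)\le2\exp(-t^2/(2C))$, $C=a_{m-1}^2m^2\sum_kg_k^2$, uniformly in $n$; letting $n\to\infty$ shows $\fW_\infty$ is Subgaussian. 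For $\Lambda\le1/2$ one has $\sum_kg_k^2=\infty$ and this fails; instead I would use the moment formulae of \cite{KuMaII201314} (or a recursive bound), which grow at most like $(r!)^{1-\Lambda}$, so that Markov's inequality gives $\P(\fW_\infty\ge t)\le\exp(-c\,t^{1/(1-\Lambda)})$ and in particular a finite moment generating function on a neighbourhood of $0$ --- an alternative, staying purely inside martingale theory, is a truncated exponential-supermartingale argument exploiting $s_k^2\le(\text{const})\sup_j(g_jW_j)\,g_k/T_{k-1}$ with $\sum_kg_k/T_{k-1}<\infty$ on the event that $g_jW_j$ stays bounded.

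Finally, absolute continuity of the law of $\fW_\infty$ is the main obstacle; I would attack it through the characteristic function $\varphi(t)=\E[e^{it\fW_\infty}]$. Writing $\fW_\infty=\fW_n+\sum_{k>n}\Delta_k$ and peeling off increments one at a time, $|\varphi(t)|$ is bounded by a (suitably conditioned) product of damping factors $|\E[e^{it\Delta_k}\mid\field_{k-1}]|$; on the event $\{0<W_{k-1}<T_{k-1}\}$ the increment $\Delta_k$ is, conditionally, a genuinely non-degenerate bounded variable, so each such factor is at most $1-c\min(1,t^2g_k^2)$, and one shows the resulting product is $o(|t|^{-1})$, whence $\varphi$ is integrable and the density is bounded and continuous. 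The hypotheses $W_0\ge a_{m-1}$ (respectively $W_0,B_0\ge a_{m-1}$ for $\Lambda=1$) are exactly what guarantees enough non-degenerate smoothing from the first draws on --- consistent with the classical fact that for $m=\Lambda=1$ the limit is $T_0$ times a $\mathrm{Beta}(W_0/\sigma,B_0/\sigma)$ variable, whose density is bounded precisely when both parameters are at least one. For mere existence of a density without the size hypotheses one only needs $\varphi(t)\to0$ together with an absolute-continuity criterion, or a direct atomlessness argument from the self-similar structure of the tail sum. The hard part will be making the damping estimate rigorous and uniform enough to integrate --- in particular controlling the conditional dependence among the $\Delta_k$, and handling $\Lambda\le1/2$, where the increments decay slowly and only few factors are effective for small $|t|$.
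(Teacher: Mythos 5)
Your martingale skeleton for the central limit theorem and the law of the iterated logarithm is essentially the paper's: the conditional variance asymptotics $s_k^2\sim c\,\fW_\infty k^{-\Lambda-1}$ (resp.\ $c'\fW_\infty(T_0-\fW_\infty)k^{-\Lambda-1}$ for $\Lambda=1$), the deterministic increment bound $|\Delta_k|=O(k^{-\Lambda})$ for Lindeberg, and Heyde's tail-sum CLT/LIL are exactly what the paper uses (Proposition \ref{prop:heyde} together with Lemmas \ref{lem:exp} and \ref{lem:fourth}). The Subgaussian bound via Azuma for $\Lambda>1/2$ and the boundedness $\fW_\infty\le T_0$ for $\Lambda=1$ also match. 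For $\Lambda\le 1/2$ your two suggestions diverge from the paper: the moment route is not justified for $m>1$ (the $(r!)^{1-\Lambda}$ growth you invoke is only known for $m=1$ from Janson's explicit formulae, and the paper emphasizes that moments for $m>1$ are structurally more involved), while your ``exponential supermartingale with $s_k^2\le C\sup_j(g_jW_j)g_k/T_{k-1}$'' sketch is morally the paper's actual argument, which applies the Chung--Lu/Bennett inequality (Proposition \ref{bennett}) after establishing the pathwise bound $\E[X_{n+1}^2\mid\field_n]\le \sigma_n+\phi_n\fW_n$ with summable deterministic $\sigma_n,\phi_n$ from the refined expansion \eqref{equiv2}. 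That part of your plan is workable but would need to be carried out at that level of precision.

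The density argument is where you take a genuinely different route, and it contains a quantitative gap. Your per-step damping factor $\bigl|\E[e^{it\Delta_k}\mid\field_{k-1}]\bigr|\le 1-c\min(1,t^2g_k^2)$ overstates the smoothing: the conditional variance of $\Delta_k$ is of order $g_k^2\,W_{k-1}/T_{k-1}$, not $g_k^2$, and on the event that few white balls have yet been drawn (so $W_{k-1}$ stays near $W_0$) it is only $O(g_k^2/k)$ --- essentially no damping. That event has probability decaying like $k^{-\Lambda W_0/a_{m-1}}$, and it is precisely the source of the unbounded density when $W_0<a_{m-1}$; any uniform damping bound of the kind you propose would yield a bounded (indeed continuous) density unconditionally, contradicting the known $m=1$ behaviour. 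There is also a lattice obstruction: $\Delta_k$ lives on a lattice of spacing $g_ka_{m-1}$, so the conditional factor equals $1$ at resonant frequencies and the bound must be replaced by one involving $\operatorname{dist}(tg_ka_{m-1},2\pi\Z)$. The paper avoids all of this by working directly with point probabilities: $W_n$ lives on the lattice $W_0+a_{m-1}\Z$, and one shows $\max_k\P(W_n=W_0+ka_{m-1})=O(n^{-\Lambda})$ by telescoping the maximal one-step transition sums $s^*(k,n)$, a finite-difference identity making $s^*(k,n)=1-\Lambda/n+O(n^{-2})$ independent of $k$ for $k\ge m$; the hypotheses $W_0\ge a_{m-1}$ (and $B_0\ge a_{m-1}$ when $\Lambda=1$) enter only through the boundary terms $s^*(0,n)$ and $s^*(n+1,n)$. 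A Portmanteau argument (Lemma \ref{lem:density}) then converts the point-mass bound into a bounded density, and conditioning on a later time at which $W_\ell\ge a_{m-1}$ gives mere existence of a density in the remaining cases. To salvage your Fourier approach you would have to condition on the trajectory of $W_k$ and control the resonances, which effectively reduces to the paper's point-probability estimate anyway.
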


Let us give a detailed discussion of the results. For $m > 1$, almost sure convergence of the martingales were established in \cite{KuMa201314}, compare also Theorem \ref{th1}, where special cases had been considered earlier \cite{ChenWei, ChenKu2013+,TsukijiMahmoud2001}. Asymptotic statements about the martingale tail sums are novel for $m > 1$ and so are the properties of the limiting distribution with the exception of the generalized \Polya\ urn studied in \cite{ChenWei, ChenKu2013+}. For $m=1$, the laws of iterated logarithm for large-index and triangular urns with $\Lambda < 1$ are new; for the remaining results, compare the discussion in the introduction. For $m=1$, the moments of $\fW_\infty$ exhibit simple explicit expressions \cite[Theorem 1.7]{Jan2006}. From these results, it is easy to see that Theorem \ref{thm:tri} is optimal in the sense that, for $m=1$ and $\Lambda < 1/2$, $\fW_\infty$ is not Subgaussian\footnote{More precisely,  $\left(\E[\fW_\infty^p]\right)^{1/p} \sim c p^{1-\Lambda}$ as $p \to \infty$ where $c = c(a_0, \sigma)$.}. In the critical case $\Lambda = 1/2$, we believe that $\fW_\infty$ is Subgaussian for any $m > 1$ but our methods are not sufficiently strong to deduce this. Similarly, by the results in \cite[Section 9]{Jan2010}, for $m=1$, the density of $\fW_\infty$ is unbounded if $\Lambda < 1, W_0 < c$ or $\Lambda = 1, \min(W_0, B_0) < c$. Finally, note that the first convergence in \eqref{gen} is mixing in the sense of R{\'e}nyi and R{\'e}v{\'e}sz \cite{renyi}. This property allows to deduce the second convergence. 

For the sake of completeness, we briefly discuss balanced affine small-index urns with $0 \neq \Lambda \leq 1/2$. Recall the martingale central limit theorem for $W_n$ in Theorem \ref{th1} \emph{i)} which is Theorem 3 in \cite{KuMa201314}. Note that, by Corollary 1 in \cite{hall78b},  this convergence is with respect to all moments. Further, analogously to the results presented in this work and relying on the same martingale methods, that is Theorem 1, Corollary 1 and Corollary 2 in \cite{Heyde1977}, the following laws of the iterated logarithm hold: for $0 \neq \Lambda < 1/2$, recalling $\gamma_1$ in \eqref{def:gam},
almost surely, 
\begin{align} \label{lilsmall}
\limsup_{n \to \infty} \frac{W_n - \E[W_n]}{\gamma_1 \sqrt{2 n \log \log n}}   = 1, \quad \liminf_{n \to \infty}   \frac{W_n - \E[W_n]}{ \gamma_1  \sqrt{2 n \log \log n}}   = -1.
\end{align}
For $\Lambda = 1/2$, with $\gamma_1$ as in \eqref{def:gam}, almost surely,
\begin{align}
\limsup_{n \to \infty} \frac{W_n - \E[W_n]}{ \gamma_1  \sqrt{2 \log n \log \log \log n}}   = 1, \quad \liminf_{n \to \infty}   \frac{W_n - \E[W_n]}{\gamma_1   \sqrt{2 \log n \log \log \log n}}      = -1. 
\end{align}
Since we are focused on large-index and triangular urns in this paper, we do work out the proofs here.

The following two tables give a schematic summary of the main results of this work and \cite{KuMa201314}.  To be more precise, our work adds the third order term in the third column of the first table and the second order term in both columns of the second table. Here, we write $X_n = Y_n + Z_n \sN$ when $Z_n^{-1} (X_n - Y_n) \to \sN$ in distribution. Further, we recall $\zeta$ and $Q$ from \eqref{def:zeta} and $\Lambda$ from \eqref{def_lambda}.
\begin{center}
\renewcommand{\arraystretch}{1.5}
$\begin{array}{|c|c|c|}
\hline
0 \neq \Lambda < 1/2 & \Lambda = 1/2 & 1/2 < \Lambda < 1   \\
\hline 
W_n = \zeta n + \gamma_1 \sqrt{n} \sN & W_n = \zeta n + \gamma_1 \sqrt{n \log n} \sN & W_n = \zeta n + (\sW_\infty + \vartheta) n^\Lambda + \alpha^{-1} \sqrt{n} \sN \\
\hline
 \gamma_1 \: \text{given in} \: \eqref{def:gam} & \gamma_1 \: \text{given in} \: \eqref{def:gam} & \vartheta \: \text{given in} \: \eqref{eta},  \alpha \: \text{given in} \: \eqref{alpha} \\
\hline
\end{array}
$
\captionof{table}{Behavior of $W_n$ in small- and large-index balanced affine urns.}
\end{center}
\begin{center}
\renewcommand{\arraystretch}{1.5}
$\begin{array}{|c|c|}
\hline
\Lambda < 1 &  \Lambda = 1 \\
\hline
W_n = Q^{-1} \fW_\infty n^\Lambda + (\beta Q)^{-1} \sqrt{\fW_\infty} n^{\Lambda /2} \sN & W_n = Q^{-1}  \fW_\infty n + (\beta Q)^{-1} \sqrt{\fW_\infty(T_0 - \fW_\infty)} \sqrt{n} \sN \\
\hline
\beta \: \text{given in} \: \eqref{defx} &   \beta \: \text{given in} \: \eqref{defx} \\ \hline
\end{array}$
\captionof{table}{Behavior of $W_n$ in triangular balanced affine urns.}
\end{center}
Note that $\sW_\infty, \fW_\infty$ as well as $\alpha, \beta, \vartheta$ and $Q$ depend on the initial configuration of the urn $(W_0, B_0)$. Further, the distributions of the  $\sW_\infty$ and $\fW_\infty$ depend on the sampling scheme.

\smallskip


\subsection{Application I: Degrees in increasing trees.}
We present an application in the case $m=1$ for triangular urn schemes in the context of random linear recursive trees. These can be constructed as follows: at time $n=1$, we start with a tree $T_1$ consisting of a single node. At time $n \geq 2$, given a tree $T_{n-1}$ of size $n-1$, we choose a node $v$ proportionally to $1 + \varrho d_v$, where $d_v$ denote its out-degree (that is, its number of children) and $\varrho \in \N_0$. The tree $T_n$ is then obtained by connecting an additional node to $v$. The most important models are $\varrho = 0$ (random recursive tree) and $\varrho = 1$ (plane-oriented recursive tree). 
By $D^{(i)}_{n}, n \geq 0,$ we denote the out-degree of the $i$-th inserted node at time $n+i$. By construction, $W^{(i)}_n := \varrho D^{(i)}_{n} + 1$ is equal to the number of white balls in a two-color \Polya\ urn model with $m=1, a_0 = \varrho, b_0 = 1, a_1 = 0, b_1 = \varrho+1, W_0 = 1, B_0 = (\varrho + 1)(i-1)$. In fact, $D_n^{(i)}$ counts the number of times we sample a white ball from the urn. From now on, assume $\varrho \geq 1$. Obviously, Theorem \ref{th1} \emph{iii)} applies and we denote the martingale limit by $\fW^{(i)}_\infty$. 
 An expression for the density of $\fW^{(i)}_\infty$ in terms of an infinite sum is given in \cite[Theorem 9.1]{Jan2010}. For $i = 1$, the term simplifies and the limit law is directly related to a distribution of Mittag-Leffler type. The cases $\alpha = 1, i \geq 1$ were studied in more detail by Pek{\"o}z, R{\"o}llin and Ross \cite{PRR} who give bounds on the convergence rates in the Kolmogorov distance and observe an interesting distributional identity for $\fW^{(i)}_\infty$ \cite[Proposition 2.3]{PRR}.
In the next corollary, we only state the law of iterated logarithm, the novel contribution in our work (for $m=1$). Here, as before, we write $\fW_n^{(i)}$ for the martingale corresponding to $W_n^{(i)}$ defined as in Theorem \ref{thm:tri}.
\begin{coroll}
For $\varrho, i \geq 1$, almost surely,
\begin{align*}
\limsup_{n \to \infty} \frac{n^{\varrho/(2(\varrho + 1))}(\fW^{(i)}_n - \fW^{(i)}_\infty)}{ \sqrt{2 \varrho \fW^{(i)}_\infty\log \log n}}   = 1, \quad \liminf_{n \to \infty} \frac{n^{\varrho/(2(\varrho + 1))}(\fW^{(i)}_n - \fW^{(i)}_\infty)}{ \sqrt{2 \varrho \fW^{(i)}_\infty\log \log n}}     =  -1. 
\end{align*}
\end{coroll}
The case $\varrho = 0$ is substantially different. Here, $D^{(i)}_{n}$ can be expressed as the sum of independent Bernoulli random variables with success probabilities $1/j, j = i, \ldots, n+i-1$. Therefore, expectation and variance of $D^{(i)}_n$ are logarithmic and both central limit theorem as well as laws of the iterated logarithm are classical. 


\subsection{Application II: Degrees in preferential attachment graphs.} Linear recursive trees are special instances of so-called \emph{preferential attachment graphs} which play an important role in modeling scale-free networks with applications in sociology, neurology and computer science (e.g.\ the webgraph). This active research topic was initiated by the seminary work of Barab{\'a}si and Albert \cite{BaraAlb}. Many authors studied dynamic networks in which new nodes are linked to several vertices in the graph. This leads to the construction of the following random circuit or directed acyclic (multi-)graph. At time $n=1$, the graph $G_1$ consists of a single node. Given the directed graph $G_{n-1}$ at time $n \geq 2$, we choose two nodes $v, v'$ independently, each of which  proportionally to $1 + \varrho d_v, \varrho \in \N_0,$ where $d_v$ counts the number of directed edges emanating from $v$. $G_n$ is obtained by adding an additional node and directed links from both $v$ and $v'$ to the latter. For $\varrho = 0$, various quantities in the resulting network have been studied, compare D{\'i}az et al. \cite{diazetal}, Tsukiji and Xhafa \cite{tsuxha}, Devroye and Janson \cite{devjan} for the height and Tsukiji and Mahmoud \cite{TsukijiMahmoud2001} for node degrees. As above, we denote by $D^{(i)}_{n}, n \geq 0,$  the out-degree of the $i$-th inserted node at time $n+i$. Then, $W^{(i)}_n := \varrho D^{(i)}_{n} + 1$ is equal to the number of white balls in a two-color \Polya\ urn model with $m=2, a_0 = 2 \varrho, b_0 = 1, a_1 = \varrho, b_1 = \varrho + 1, a_2 = 0, b_2 = 2\varrho+1, W_0 = 1, B_0 = (2\varrho + 1)(i-1)$. The almost sure convergence in the next corollary follows immediately from Theorem \ref{th1} $iii)$ while the main theorems in this work give the Gaussian limit law, law of the iterated logarithm and the properties of the limiting random variable.

\begin{coroll}
Let $\varrho, i \geq 1$ and $\Lambda = 2\varrho / (2 \varrho + 1), Q  = (i-1)! / \Gamma( i - \Lambda)$. Then, almost surely and with convergence of all moments, 
\begin{align*}
\frac{Q \cdot W^{(i)}_n }{n^\Lambda} \to \fW^{(i)}_\infty,
\end{align*}
where the limit random variable has unit mean and its distribution admits a density and Subgaussian tails. The density can be chosen bounded for $\varrho = 1$.
In distribution and with convergence of all moments, 
\begin{align*}
n^{\Lambda/2} ( n^{-\Lambda} Q \cdot W^{(i)}_n - \fW^{(i)}_\infty) \to \sqrt{\varrho \fW^{(i)}_\infty} \cdot \sN, 
\end{align*}
where $\fW^{(i)}_\infty$ and $\sN$ are independent. Finally, almost surely,
\begin{align*}
\limsup_{n \to \infty} \frac{n^{\Lambda/2} ( n^{-\Lambda} Q \cdot W^{(i)}_n - \fW^{(i)}_\infty) }{ \sqrt{2 \varrho \fW^{(i)}_\infty\log \log n}}   = 1, \quad \liminf_{n \to \infty} \frac{n^{\Lambda/2} ( n^{-\Lambda} Q \cdot W^{(i)}_n - \fW^{(i)}_\infty) }{ \sqrt{2 \varrho \fW^{(i)}_\infty\log \log n}}     =  -1. 
\end{align*}
\end{coroll}

Again, for $\varrho = 0$, node degrees grow logarithmically and one obtains similar results as in the case of random recursive trees, compare Mahmoud \cite{Mah2014}. 

\subsection{Application III: Leaves in random circuits.}
In this section, we consider the graph $G_n$ constructed as above with $\varrho = 0$ and the modification that, in each step, the parent nodes $v$ and $v'$ are chosen without replacement. Denote by
$L_n$ the number of leaves (nodes with out-degree zero) in the graph. By observing that $L_n$ coincides with the number of white balls in a two-color \Polya\ urn model (at time $n-2$) with $m=2, a_0 = -1, b_0 = 2, a_1 = 0, b_1 = 1, a_2 = 1, b_2 = 0, W_0 = 1, B_0 = 1$,  a Gaussian central limit theorem for $L_n$ is proved in \cite{TsukijiMahmoud2001}. This also follows immediately from Theorem \ref{th1} $i)$. The next corollary states the accompanying law of the iterated logarithm and follows from \eqref{lilsmall}. 
\begin{coroll}

Almost surely, 
\begin{align*} 
\limsup_{n \to \infty} \frac{L_n -  n/3 }{ \sqrt{n \log \log n}}   = \frac{2 \sqrt{10}}{15}, \quad \liminf_{n \to \infty}  \frac{L_n -  n/3 }{ \sqrt{n \log \log n}}   = -\frac{2 \sqrt{10}}{15}.
\end{align*}
\end{coroll}

\subsection{Generalizations and outlook}
There are several possibilities to generalize and strengthen our results. First, distributional and almost sure convergence theorems remain valid for random initial configurations both on a conditioned and unconditioned level assuming tenability. Furthermore, our results can be extended to models with non-integer values of $a_{m-1}$, $a_{m}$ and $\sigma$,
such that $a_k\ge 0$, $0\le k\le m$, assuming $W_0,B_0\in [0,\infty)$ such that $W_0+B_0\ge m$ for \momp and $W_0+B_0\ge 1$ for \morp ensuring tenability.
We expect that our methods also apply to the study of the one-dimensional limit random variables and their laws in linear affine balanced models with $r \geq 2$ colors. This will be content of future work.


\section{Preliminaries}
\label{Sec:Prelim}

\subsection{Tenability}
\label{Sec:ten}
Tenable urn schemes in the two-color case and $m=1$ were classified in \cite{Bagchi1985}. In multiple drawings schemes, sufficient conditions for tenability were formulated by Konzem and Mahmoud~\cite{KonzemMahmoud}. In the following lemma, we characterize tenability in the models considered in this work. Since the result is not at the core of our work, we defer its proof to the appendix.

\begin{lemma} \label{lem:ten}
Let $\matM$ be the replacement matrix of a balanced affine urn process with $\Lambda \neq 0$ and initial configuration $(W_0, B_0)$. Let $g_a = \textbf{\emph{gcd}} (a_0, \ldots, a_m), g_b = \textbf{\emph{gcd}} (b_0, \ldots, b_m)$ and, for $z \in \Z, y \in \N$, abbreviate $[z]_y := z \ \emph{mod} \ y \in \{0, \ldots, y-1\}$.
\begin{enumerate}[i)]
\item Under \mor,  the scheme is tenable, if and only if, $a_1, \ldots, a_m, b_0, \ldots, b_{m-1} \geq 0$ and 
\begin{align*} a_0 \in \{z \in -\N: z | W_0, z | (a_{m-1} - a_m)\} \cup \N_0, \\
b_m \in \{z \in -\N: z | B_0, z | (a_{m-1} - a_m)\} \cup \N_0.\end{align*}
\item Under \mom,  the scheme is tenable, if and only if,  $a_k \geq -(m-k), 1 \leq k \leq m$ and  $b_k \geq -k, 0 \leq k \leq m-1$, and
\begin{align*} a_0 & \in [-m, \infty) \cup \left( [-m-g_a+1, -m) \cap \{z \in -\N: [W_0]_{g_a} \in \{[-z]_{g_a}, [-z+1]_{g_a}, \ldots, [m+g_a-1]_{g_a} \}\right)  \\
b_m & \in [-m, \infty) \cup \left( [-m-g_b+1, -m) \cap \{z \in -\N: [B_0]_{g_b} \in \{[-z]_{g_b}, [-z+1]_{g_b}, \ldots, [m+g_b-1]_{g_b} \}\right) 
\end{align*}
\end{enumerate}
\end{lemma}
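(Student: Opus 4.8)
The statement to prove is Lemma~\ref{lem:ten}, the tenability characterization; since it is deferred to the appendix, I will sketch the strategy. The plan is to reduce tenability to a finite, purely arithmetic reachability question about the possible urn configurations, and then to analyze each of the two models separately.

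\emph{Setup and reduction.} First I would observe that since the urn is balanced, $T_n = T_0 + \sigma n$ is deterministic, so the configuration at time $n$ is determined by $W_n$ alone, and tenability fails exactly when the process can reach a configuration from which some legal draw forces the addition of a negative number of a color not present in sufficient quantity (under $\mor$, a draw with replacement of $k$ whites requires only $W_n \geq 1$ whenever $k \geq 1$, i.e.\ we need every entry $a_1,\dots,a_m,b_0,\dots,b_{m-1}$ nonnegative plus a condition on the two ``corner'' entries $a_0$ and $b_m$; under $\mom$, a draw without replacement of $k$ whites requires $W_n \geq k$ and $B_n \geq m-k$, giving the shifted bounds $a_k \geq -(m-k)$ and $b_k \geq -k$). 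So in both cases the ``bulk'' inequalities are immediate necessary conditions, and the whole content is deciding when the corner entries $a_0$ (all-black draw) and $b_m$ (all-white draw) may be negative. By the $W \leftrightarrow B$ symmetry it suffices to analyze the $a_0$ condition; the $b_m$ condition is identical with $(W_0, a_{m-1}-a_m, g_a)$ replaced by $(B_0, a_{m-1}-a_m, g_b)$ (note $b_{m-1}-b_m = \sigma - a_{m-1} - (\sigma - a_m) = a_m - a_{m-1}$, which is why the same gcd $a_{m-1}-a_m$ appears up to sign).

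\emph{Analysis of the two models.} For $\mor$ with $a_0 < 0$: an all-black draw can occur precisely when $W_n = 0$, and it then changes $W$ by $a_0 < 0$, so we need the set of values $W_n$ can take to never land at $0$ unless $a_0 \mid W_n$. I would track the subgroup/coset of $\Z$ generated by the reachable increments of $W$. Starting from $W_0$, each step adds one of $a_0,\dots,a_m$; when $W = 0$ the only all-black option adds $a_0$, when $W \geq 1$ other draws are available. Using that the affine recurrence forces $a_k = (m-k)(a_{m-1}-a_m) + a_m$, the gcd $g_a$ of all $a_k$ equals $\gcd(a_m, a_{m-1}-a_m)$, and I claim $W_n$ always lies in $W_0 + g_a \Z$ and, conversely, every such nonnegative value (in particular $0$, if $g_a \mid W_0$) is reachable once the urn is large enough; hence tenability forces $a_0 \mid W_0$ and $a_0 \mid (a_{m-1}-a_m)$ (equivalently $a_0 \mid g_a$), which is exactly the displayed condition. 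For $\mom$ with $a_0 < -m$: here the subtlety is that when few white balls are present, only draws with small $k$ are legal, so not all increments $a_k$ are available near the boundary; one must check that from a configuration with $W_n = j$ whites ($0 \le j \le m-1$, the dangerous zone, since for $W_n\ge m$ all draws are legal) the reachable residues of $W$ modulo $g_a$ are exactly $\{[j]_{g_a}\}$ shifted by the increments $a_0, a_1, \dots$ available at levels below $m$, and that the bad event (being forced to draw all black when $W_n < |a_0| \le m + g_a - 1$ and $a_0 \nmid W_n$) is avoidable iff $[W_0]_{g_a}$ avoids the listed residue window. This is the combinatorial heart of the proof.

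\emph{Main obstacle.} The hard part will be the $\mom$ case with $a_0 \in [-m-g_a+1,-m)$: unlike $\mor$, the reachability analysis near the white-depleted boundary is not a clean subgroup argument because the set of legal draws depends on the current (small) value of $W_n$, so one has to argue carefully which residue classes modulo $g_a$ can be reached in the boundary region $\{0,1,\dots,m-1\}$, and show both directions (that the stated residue condition is necessary, by exhibiting a trajectory into a configuration requiring an illegal all-black draw, and sufficient, by showing the walk can always be steered away). I would handle this by an explicit induction on the number of draws, maintaining the invariant that the reachable set of $W_n$-values is the intersection of $W_0 + g_a\Z$ with a half-line, and separately checking the small-$W$ ``escape'' moves; the $\mor$ case and all the elementary necessary inequalities are then routine bookkeeping.
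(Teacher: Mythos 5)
Your overall strategy --- reduce tenability to an arithmetic reachability question for the Markov chain $W_n$, note that the ``bulk'' inequalities are immediate, and concentrate on the two corner entries via a gcd/residue analysis of the reachable set, with the delicate part being \momp near the boundary --- is exactly the paper's strategy. But the execution contains a genuine error that derails the argument: you have the correspondence between the corner entries and the draw types reversed. Since a sample with $k$ white and $m-k$ black balls triggers the addition of $a_{m-k}$ white balls, the entry $a_0$ corresponds to the \emph{all-white} sample and $b_m$ to the all-black one, not the other way around. Consequently the dangerous event when $a_0<0$ is not ``being forced to draw all black at $W_n=0$'' --- at $W_n=0$ the forced all-black draw adds $a_m\ge 0$ white balls and is harmless --- but rather that an all-white sample is \emph{possible} (under \mor) whenever $W_n\ge 1$, so tenability forces every reachable value to lie in $\{0\}\cup[-a_0,\infty)$. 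Moreover, ``an all-black draw can occur precisely when $W_n=0$'' is false under \mor\ (with replacement it can occur whenever $B_n\ge 1$), and tenability requires accommodating \emph{every} possible draw from every reachable configuration; sufficiency must show that no bad configuration is reachable, not that the walk ``can be steered away''.

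Because of this, your necessity argument for \mor\ does not go through: ``never land at $0$ unless $a_0\mid W_n$'' is vacuous (every integer divides $0$) and does not yield $a_0\mid W_0$ and $a_0\mid(a_{m-1}-a_m)$. The paper instead uses a descent: from any reachable $z\ge 1$ one may repeatedly draw all-white samples, decreasing $W$ by $|a_0|$ while the value stays $\ge 1$; the descent stops at a value in $\{1,\dots,-a_0\}$ from which a further all-white draw is still legal, so tenability forces that value to equal $-a_0$, i.e.\ $a_0\mid z$ for every $z$ in the range --- applied to $W_0$, $W_0+a_{m-1}$ and $W_0+a_m$ this gives both divisibility conditions. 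For \mom\ your outline (the range intersected with $[m,\infty)$ equals $(W_0+g_a\Z)\cap[m,\infty)$, and tenability amounts to this set missing $[m,-a_0-1]$) is the right idea and matches the paper, but ``$a_0\nmid W_n$'' must be replaced by the residue condition modulo $g_a$, and the reachability of all of $(W_0+g_a\Z)\cap[m,\infty)$ has to be established (the paper does this by using that the urn is necessarily of small index, so $W_n\to\infty$ almost surely, and then descending with $a_0$ and the boundary moves $a_0,a_1$).
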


\subsection{Forward equations}
Following the notation introduced in \cite{KuMa201314}, we write
$\mathbf{1}_{n}(W^kB^{m-k})$ for the indicator function of the event that $k$ white balls and $m-k$ black balls are drawn in the
$n$th step. 
By the dynamics of the urn process, we have 
\begin{equation}
\label{MuliDrawsLinDistEqn1}
W_{n} = W_{n-1}  + \Delta_n,\qquad \Delta_n=\sum_{k=0}^{m} a_{m-k} \, \mathbf{1}_{n}(W^k B^{m-k}),\quad n\ge 1.
\end{equation}
Thus, 
\begin{equation*}
p_{n;m,k}=\P\bigl(\Delta_n=a_{m-k}\given \field_{n-1}\bigr)= \E[ \mathbf{1}_{n}(W^kB^{m-k}) \given \field_{n-1}],
\end{equation*}
where the conditional probabilities $p_{n;m,k}$ are given by 
\begin{equation*}
p_{n;m,k}=
\begin{cases}
\binom{W_{n-1}}{k}\binom{B_{n-1}}{m-k} / \binom{T_{n-1}}{m} 
& \text{for \mom},\\
\binom{m}{k} W_{n-1}^k B_{n-1}^{m-k} / T_{n-1}^m 
& \text{for \mor}.
\end{cases}
\end{equation*}

\subsection{The mean number of nodes}\label{AffineUrns}
We recall results on the mean number of white balls and a strong law of large numbers from \cite{KuMa201314}. 
\begin{lemma}[\cite{KuMa201314}]
\label{Prop:caligraphicW}
For both models $\mathcal{R}$ and $\mathcal{M}$ and $n \geq 1$, we have 
 $\E[W_{n}]=\frac{a_m}{g_n}\sum_{j=1}^{n}g_j +W_0\frac{1}{g_n}$ with $g_n$ in \eqref{ExpansionGn}. 
For large-index and triangular urns with $\Lambda<1$, it holds
\begin{align}
\E[W_{n}]&= \frac{a_m\left(n+\frac{T_0}\sigma\right)}{1-\Lambda} + \Big(W_0-\frac{\frac{a_mT_0}\sigma}{1-\Lambda}\Big)\frac{\binom{n-1+\frac{T_0}\sigma + \Lambda}{n}}{\binom{n-1
+\frac{T_0}\sigma}{n}} \nonumber \\
&= \zeta n + \vartheta n^{\Lambda} 
+ \Gro(1), \quad \vartheta = \Big( W_0-\frac{\frac{a_m T_0}\sigma}{1-\Lambda}\Big) \frac{\Gamma(\frac{T_0}\sigma)}{\Gamma(\frac{T_0}\sigma+\Lambda)}. \label{eta}
\end{align}
For triangular urns with $\Lambda = 1$, we have 
$$\E[W_{n}]=W_0\frac{n\sigma +T_0}{T_0}.$$
For large-index urns, we have, almost surely, 
\begin{align} \label{convasma}
\frac{W_n}{n} \to \zeta.
\end{align}

\end{lemma}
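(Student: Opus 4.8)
The plan is to first derive an exact recursive description of $\E[W_n]$, then read off the asymptotic expansion from \eqref{ExpansionGn}, and finally obtain the strong law of large numbers from an $L^2$-bounded martingale.

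First I would compute the one-step conditional mean. By \eqref{MuliDrawsLinDistEqn1}, $\E[W_n\mid\field_{n-1}]=W_{n-1}+\E[\Delta_n\mid\field_{n-1}]$. Writing the affine relation $a_k=(m-k)(a_{m-1}-a_m)+a_m$ as $a_{m-k}=k(a_{m-1}-a_m)+a_m$ and letting $K_n$ be the number of white balls drawn in step $n$, we get $\E[\Delta_n\mid\field_{n-1}]=(a_{m-1}-a_m)\E[K_n\mid\field_{n-1}]+a_m$. This is the only place where the two sampling schemes could differ, but $\E[K_n\mid\field_{n-1}]=mW_{n-1}/T_{n-1}$ in both cases, since $\mathbf{bin}(m,W_{n-1}/T_{n-1})$ and $\mathbf{hyp}(T_{n-1},W_{n-1},m)$ have the same mean. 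As the urn is balanced, $T_{n-1}=T_0+\sigma(n-1)$ is deterministic and $m(a_{m-1}-a_m)/\sigma=\Lambda$, so $\E[W_n\mid\field_{n-1}]=\alpha_nW_{n-1}+a_m$ with $\alpha_n=(n-1+T_0/\sigma+\Lambda)/(n-1+T_0/\sigma)$, which recovers \eqref{MuliDrawsLinPropLinear} with the constants of \cite[Proposition~1]{KuMa201314}. Taking expectations and solving the resulting linear recurrence gives $\E[W_n]=G_n\bigl(W_0+a_m\sum_{j=1}^n G_j^{-1}\bigr)$ with $G_n=\prod_{j=1}^n\alpha_j$; a telescoping of the Gamma-quotients shows $G_n=1/g_n$ for $g_n$ in \eqref{ExpansionGn}, hence $\E[W_n]=\frac{a_m}{g_n}\sum_{j=1}^n g_j+\frac{W_0}{g_n}$, the first formula of the lemma.

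Next I would evaluate $\sum_{j=1}^n g_j$ in closed form. Using the telescoping identity $\frac{\Gamma(j+a)}{\Gamma(j+a+\Lambda)}=\frac{1}{1-\Lambda}\bigl(\frac{\Gamma(j+1+a)}{\Gamma(j+a+\Lambda)}-\frac{\Gamma(j+a)}{\Gamma(j-1+a+\Lambda)}\bigr)$ with $a=T_0/\sigma$, one obtains $\sum_{j=1}^n g_j=\frac{1}{1-\Lambda}\bigl((n+T_0/\sigma)g_n-T_0/\sigma\bigr)$. Substituting back gives exactly the first displayed line of \eqref{eta}, and then the expansion $\E[W_n]=\zeta n+\vartheta n^\Lambda+O(1)$ follows at once from $1/g_n=Q^{-1}n^\Lambda+O(n^{\Lambda-1})$ (a consequence of \eqref{ExpansionGn}) together with $\Lambda<1$. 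For a triangular urn with $\Lambda=1$ we have $a_m=0$, so $\E[W_n]=W_0/g_n$, and $g_n$ collapses to $(T_0/\sigma)/(n+T_0/\sigma)=T_0/(T_0+\sigma n)$, giving $\E[W_n]=W_0(n\sigma+T_0)/T_0$.

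Finally, for the strong law I would use that $g_n\alpha_n=g_{n-1}$, which together with the mean recurrence shows that $\mathcal W_n:=g_n(W_n-\E[W_n])$ is an $(\field_n)$-martingale, with increments $\mathcal W_n-\mathcal W_{n-1}=g_n(\Delta_n-a_m)+(g_n-g_{n-1})W_{n-1}$. Since the $a_k$ are bounded, $W_{n-1}=O(n)$ trivially, $g_n=O(n^{-\Lambda})$ and $g_n-g_{n-1}=O(n^{-\Lambda-1})$, the increments are $O(n^{-\Lambda})$ uniformly, so $\sum_n\E[(\mathcal W_n-\mathcal W_{n-1})^2\mid\field_{n-1}]=O(\sum_n n^{-2\Lambda})<\infty$ because $\Lambda>1/2$ in a large-index urn. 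Hence $\mathcal W_n$ is $L^2$-bounded, so it converges almost surely to a finite limit, and $W_n=\E[W_n]+g_n^{-1}\mathcal W_n=\zeta n+\vartheta n^\Lambda+O(1)+O(n^\Lambda)$ almost surely; dividing by $n$ and using $\Lambda<1$ yields $W_n/n\to\zeta$ almost surely. The only step here that is not purely mechanical is the telescoping evaluation of $\sum_{j=1}^n g_j$ (equivalently, recognizing $g_n=1/\prod_{j\le n}\alpha_j$); the one conceptual point worth stressing is that the schemes $\mathcal R$ and $\mathcal M$ can be handled in one stroke only because binomial and hypergeometric sampling agree in their first moment, whereas the higher moments that drive the martingale central limit theorems of Theorems~\ref{thm:large} and \ref{thm:tri} genuinely differ.
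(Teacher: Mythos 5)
Your proof is correct and follows the same route as the cited source \cite{KuMa201314} (the paper itself only quotes this lemma): the affine one-step recurrence $\E[W_n\mid\field_{n-1}]=\alpha_nW_{n-1}+a_m$, its explicit solution via $G_n=\prod_{j\le n}\alpha_j=1/g_n$, the Gamma-quotient telescoping of $\sum_{j\le n}g_j$, and the $L^2$-bounded martingale $g_n(W_n-\E[W_n])$ for the strong law. All the individual identities (the telescoping formula, $g_n\alpha_n=g_{n-1}$, the increment decomposition and the $O(n^{-2\Lambda})$ summability for $\Lambda>1/2$) check out.
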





\subsection{Martingale tail sums and central limit theorems}
\label{Sec:PrelimTailSums}
The following proposition on martingale tail sums is essentially a restatement of Theorem 1, Corollary 1 and Corollary 2 in Heyde \cite{Heyde1977}. 
The result concerning convergence of moments follows from Theorem 3.5 in Hall and Heyde \cite{hallheyde80}, compare also Theorem 1 in Hall \cite{hall78b} for the case $\eta = 1$.

\begin{prop} \label{prop:heyde}
Let $Z_n, n \geq 0,$ be a zero-mean, $L_2$-bounded martingale with respect to a filtration $\mathcal{G}_n, n \geq 0$. Let $X_n = Z_n - Z_{n-1}, n \geq 1, X_0 := 0,$ and $s_n^2 = \sum_{i=n}^\infty \E[X_i^2]$. Denote $Z$ the almost sure limit of $Z_n$. Assume that, for some non-zero and finite random variable $\eta$, we have, almost surely,
\begin{align} \label{con1}
s_n^{-2} \sum_{i = n}^\infty \E[X_i^2 | \mathcal{G}_{i-1}] \to \eta^2, \end{align}
and, for all $\varepsilon > 0$, 
\begin{align} \label{con2} s_n^{-2} \sum_{i=n}^\infty  \E[X_i^2 \I{|X_i| \geq \varepsilon s_n}] \to 0. \end{align}
Then, \begin{align} \label{conv1} (\eta s_n)^{-1} (Z_n - Z) \to \mathcal {N}, \quad  s_n^{-1} (Z_n - Z) \to \eta' \mathcal {N}, \end{align} in distribution, where $\eta'$ and $\mathcal N$ are independent and $\eta'$ is distributed like $\eta$. 
If
\begin{itemize}
\item [\textbf{L1.}] $\sum_{i=1}^\infty s_i^{-1} \E[|X_i| \I{|X_i| \geq \varepsilon s_i}] < \infty$ for all $\varepsilon > 0$,
\item [\textbf{L2.}] $\sum_{i=1}^\infty s_i^{-4} \E[X_i^4] < \infty$, 
\end{itemize}
then, almost surely,
\begin{align*}
\limsup_{n \to \infty}  \frac{Z_n - Z}{\eta s_n  \sqrt{2 \log \log s_n^{-1}}}  = 1, \quad \liminf_{n \to \infty} \frac{Z_n - Z}{\eta s_n  \sqrt{2 \log \log s_n^{-1}}}  = -1. 
\end{align*}
Finally, if for all $p \in \N$ sufficiently large,  
\begin{itemize}
\item [\textbf{P1.}] $Z_n$ is bounded in $L_p$,
\item [\textbf{P2.}] $s_n^{-2p} \sum_{i=n}^\infty  \E[X_i^{2p}] \to 0$,
\item [\textbf{P3.}] $s_n^{-2} \sum_{i = n}^\infty \E[X_i^2 | \mathcal{G}_{i-1}]$ is bounded in $L_p$, 
\end{itemize}
then the second convergence in \eqref{conv1} is with respect to all moments.
\end{prop}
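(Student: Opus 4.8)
The plan is to derive the three assertions from the corresponding results in Heyde \cite{Heyde1977} (for the central limit theorem, its mixing refinement, and the law of the iterated logarithm) and in Hall \cite{hall78b} (for the convergence of moments), the only real work being the reconciliation of mildly different normalizations and hypothesis formulations. First, an $L_2$-bounded martingale converges almost surely and in $L_2$, so the limit $Z$ exists. Heyde works with the tail sum $U_n := Z - Z_n = -\sum_{i > n} X_i$ and its variance $\tilde s_n^2 = \E[U_n^2] = \sum_{i > n} \E[X_i^2]$, which differs from our $s_n^2 = \sum_{i \ge n} \E[X_i^2]$ by the single term $\E[X_n^2]$. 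Extracting the $i = n$ summand from \eqref{con2} and estimating the complementary part by $s_n^{-2}\E[X_n^2 \I{|X_n| < \varepsilon s_n}] \le \varepsilon^2$ gives $\limsup_n s_n^{-2}\E[X_n^2] \le \varepsilon^2$ for every $\varepsilon > 0$, hence $\E[X_n^2] = o(s_n^2)$ and $\tilde s_n \sim s_n$ (we may assume $s_n > 0$, else the tail is eventually trivial). Thus any statement phrased with $\tilde s_n$ transfers verbatim to $s_n$, and \eqref{con1}, \eqref{con2} are exactly Heyde's hypotheses in the reconciled form.

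For the distributional statements I would apply Heyde's central limit theorem (Theorem 1 in \cite{Heyde1977}): it yields that $(\eta s_n)^{-1}(Z_n - Z) \to \mathcal N$, not merely in distribution but mixing in the sense of Rényi \cite{renyi}, the requirement that $\eta$ be non-zero and finite matching his condition on the limit. To obtain the second convergence in \eqref{conv1}, note that $\eta$ is $\mathcal G_\infty$-measurable, being the almost sure limit of the $\mathcal G_\infty$-measurable tail sums $s_n^{-2}\sum_{i \ge n}\E[X_i^2 \mid \mathcal G_{i-1}]$. Mixing convergence then gives the joint convergence of $\big((\eta s_n)^{-1}(Z_n - Z),\, \eta\big)$ to $(\mathcal N, \eta)$ with $\mathcal N$ independent of $\eta$; multiplying the coordinates yields $s_n^{-1}(Z_n - Z) \to \eta \mathcal N$ in distribution, and $\eta\mathcal N$ has the law of $\eta' \mathcal N$ with $\eta', \mathcal N$ independent and $\eta' \law \eta$.

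For the law of the iterated logarithm I would invoke the iterated-logarithm supplement of \cite{Heyde1977} (its Corollaries 1 and 2): under $L_2$-boundedness together with \eqref{con1}, \eqref{con2} and the summability conditions \textbf{L1} and \textbf{L2}, one gets $\limsup_n (Z_n - Z)/(\eta \tilde s_n \sqrt{2\log\log \tilde s_n^{-1}}) = 1$ and $\liminf_n = -1$ almost surely. The equivalence $\tilde s_n \sim s_n$ also gives $\log\log \tilde s_n^{-1} \sim \log\log s_n^{-1}$ (both tend to infinity since $s_n \to 0$), so the normalization may be replaced by $\eta s_n \sqrt{2\log\log s_n^{-1}}$ without affecting the limits. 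For the convergence of moments, observe that when $\eta = 1$ the two limits in \eqref{conv1} coincide and equal $s_n^{-1}(Z_n - Z) \to \mathcal N$; Hall's Theorem 1 in \cite{hall78b} upgrades this to convergence of all moments under precisely \textbf{P1} ($L_p$-boundedness of $Z_n$), \textbf{P2} (the Lyapunov-type control $s_n^{-2p}\sum_{i \ge n}\E[X_i^{2p}] \to 0$) and \textbf{P3} ($L_p$-boundedness of the conditional-variance ratio), for all large $p$, after the same reindexing and, if needed, the routine passage to a forward martingale.

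I expect the main difficulty to be essentially bookkeeping: the source papers use slightly different tail-variance normalizations, indexings and formulations of their Lindeberg- and moment-type hypotheses, and these must be matched term by term with \eqref{con1}, \eqref{con2}, \textbf{L1}, \textbf{L2} and \textbf{P1}--\textbf{P3}. The one point genuinely requiring care is the measurability of $\eta$ together with the use of mixing — rather than plain distributional — convergence in Heyde's theorem, which is what permits the passage from the first to the second assertion of \eqref{conv1}.
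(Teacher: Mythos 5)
Your proposal is correct and follows exactly the route the paper takes: the paper offers no proof of this proposition, stating only that it is "essentially a restatement of Theorem 1, Corollary 1 and Corollary 2 in Heyde \cite{Heyde1977}" with the moment convergence from Theorem 1 in Hall \cite{hall78b}, which are precisely the results you invoke. Your additional bookkeeping (showing $\E[X_n^2]=o(s_n^2)$ to reconcile the tail-variance indexing, and using the mixing character of Heyde's convergence together with the $\mathcal G_\infty$-measurability of $\eta$ to pass to the second limit in \eqref{conv1}) is sound and in fact supplies details the paper leaves implicit.
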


\subsection{Martingale concentration inequalities} \label{sec:azuma}
In order to show that the limits $\sW_\infty$ and $\fW_\infty$ have light tails, we need the following two tail bounds for martingales. The first is a well-known variant of Azuma's inequality.
\begin{prop} \label{azuma}
Let $M_0, M_1, \ldots$ be a martingale with respect to a filtration $\mathcal G_0, \mathcal G_1, \ldots$ Assume that, for all $n \geq 1$, there exist constants $c_n \geq 0$ and $\mathcal G_{n-1}$ measurable random variables $Z_n$, such that, almost surely, 
$0 \leq M_{n} - M_{n-1} - Z_n \leq c_n.$ Then, 
$$\P(|M_n - M_0| \geq t) \leq 2 \exp \left(- \frac{t^2}{2 \sum_{i=1}^n c_i^2} \right), \quad t > 0.$$
\end{prop}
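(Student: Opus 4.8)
The plan is to run the standard exponential-moment (Chernoff) argument, the only twist being that the bounded-increment interval has $\mathcal G_{n-1}$-measurable endpoints but deterministic length $c_n$, so that Hoeffding's lemma may be applied conditionally.

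Set $D_i = M_i - M_{i-1}$. By the martingale property $\E[D_i \mid \mathcal G_{i-1}] = 0$, and by hypothesis $Z_i \leq D_i \leq Z_i + c_i$ almost surely with $Z_i$ being $\mathcal G_{i-1}$-measurable. Hence, conditionally on $\mathcal G_{i-1}$, the increment $D_i$ is a mean-zero random variable supported in an interval of length $c_i$. Hoeffding's lemma (if $\E[X]=0$ and $X \in [a,b]$ almost surely then $\E[e^{\lambda X}] \leq e^{\lambda^2(b-a)^2/8}$), applied to the conditional law, gives for every $\lambda > 0$
$$ \E\bigl[e^{\lambda D_i} \mid \mathcal G_{i-1}\bigr] \leq e^{\lambda^2 c_i^2/8} \leq e^{\lambda^2 c_i^2/2}. $$

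Next I would iterate this bound. Writing $M_n - M_0 = \sum_{i=1}^n D_i$ and using that $e^{\lambda(M_{n-1}-M_0)}$ is $\mathcal G_{n-1}$-measurable,
$$ \E\bigl[e^{\lambda(M_n - M_0)}\bigr] = \E\Bigl[e^{\lambda(M_{n-1} - M_0)} \, \E[e^{\lambda D_n} \mid \mathcal G_{n-1}]\Bigr] \leq e^{\lambda^2 c_n^2/2}\, \E\bigl[e^{\lambda(M_{n-1} - M_0)}\bigr], $$
so by induction $\E[e^{\lambda(M_n - M_0)}] \leq \exp\bigl(\tfrac{\lambda^2}{2}\sum_{i=1}^n c_i^2\bigr)$. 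Markov's inequality then yields $\P(M_n - M_0 \geq t) \leq \exp\bigl(-\lambda t + \tfrac{\lambda^2}{2}\sum_{i=1}^n c_i^2\bigr)$ for all $\lambda>0$, and choosing $\lambda = t/\sum_{i=1}^n c_i^2$ produces $\P(M_n - M_0 \geq t) \leq \exp\bigl(-t^2/(2\sum_{i=1}^n c_i^2)\bigr)$.

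Finally, $-M_0, -M_1, \ldots$ is also a martingale, with increments $-D_i \in [-Z_i - c_i, -Z_i]$, again an interval of length $c_i$; the same estimate gives $\P(M_n - M_0 \leq -t) \leq \exp\bigl(-t^2/(2\sum_{i=1}^n c_i^2)\bigr)$, and adding the two one-sided bounds gives the claim. There is no genuine obstacle here: the argument is entirely standard, and the only point requiring (minor) care is the conditional application of Hoeffding's lemma, which is legitimate precisely because the endpoints $Z_i$ and $Z_i + c_i$ are $\mathcal G_{i-1}$-measurable while the gap $c_i$ is constant.
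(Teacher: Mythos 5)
Your argument is correct: it is the standard Chernoff/Hoeffding proof of Azuma's inequality, with Hoeffding's lemma applied to the conditional law of the increment, which is valid here precisely because the endpoints $Z_i$, $Z_i+c_i$ are $\mathcal G_{i-1}$-measurable while the interval length $c_i$ is deterministic. The paper states this proposition without proof, as a well-known variant of Azuma's inequality, so there is nothing to compare against; note only that if you kept the constant $1/8$ from Hoeffding's lemma instead of relaxing it to $1/2$, you would obtain the sharper bound $2\exp\bigl(-2t^2/\sum_{i=1}^n c_i^2\bigr)$, of which the stated inequality is a weakening.
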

The next proposition is a variant of Bennett's inequality for martingales which improves on the previous proposition when the conditional variances of the martingale differences are significantly smaller than their essential suprema. 
\begin{prop} [Chung and Lu \cite{survey}, Theorem 7.3] \label{bennett}
Let $M_0, M_1, \ldots$ be a martingale with respect to a filtration $\mathcal G_0, \mathcal G_1, \ldots $ where $M_0$ is almost surely constant. Assume that there exists a constant $M > 0$ and, for all $n \geq 1$, non-negative constants $\sigma_n, \phi_n$  such that,  almost surely, 
$M_{n} - M_{n-1} \leq M$ and $\E[ (M_{n} - M_{n-1})^2 | \mathcal G_{n-1}] \leq \sigma_n + \phi_n M_{n-1}$. Then, 
\begin{align} \label{bennettbound} \P(M_n - M_0 \geq t) \leq  \exp \left(- \frac{t^2}{2 (M t  / 3 + \sum_{i=1}^n \sigma_i + (M_0 + t) \sum_{i=1}^n \phi_i)} \right), \quad t > 0. \end{align}
\end{prop}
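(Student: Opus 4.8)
\textbf{Proof proposal for Proposition \ref{bennett} (Bennett-type martingale inequality).}

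The plan is to follow the classical Bernstein--Bennett route: estimate the moment-generating function of the martingale increments via the conditional variance bound, telescope along the filtration, and then optimize the exponential parameter. Concretely, fix $\lambda > 0$ and set $S_n = M_n - M_0$. First I would record the elementary scalar inequality $e^{\lambda x} \leq 1 + \lambda x + (e^{\lambda M} - 1 - \lambda M) x^2 / M^2$ valid for all $x \leq M$ (this is where the one-sided bound $M_n - M_{n-1} \leq M$ enters; the function $x \mapsto (e^{\lambda x} - 1 - \lambda x)/x^2$ is nondecreasing). Applying this with $x = M_n - M_{n-1}$, taking conditional expectation given $\mathcal G_{n-1}$, and using $\E[M_n - M_{n-1} \mid \mathcal G_{n-1}] = 0$ together with $\E[(M_n - M_{n-1})^2 \mid \mathcal G_{n-1}] \leq \sigma_n + \phi_n M_{n-1}$ yields
\begin{align*}
\E\bigl[e^{\lambda(M_n - M_{n-1})} \bigm| \mathcal G_{n-1}\bigr] \leq 1 + \frac{e^{\lambda M} - 1 - \lambda M}{M^2}\,(\sigma_n + \phi_n M_{n-1}) \leq \exp\Bigl(\frac{e^{\lambda M} - 1 - \lambda M}{M^2}\,(\sigma_n + \phi_n M_{n-1})\Bigr).
\end{align*}

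The complication compared to the textbook i.i.d.\ case is the term $\phi_n M_{n-1}$, which is not deterministic. To handle it I would write $M_{n-1} = M_0 + S_{n-1} \leq M_0 + t$ only on the event we care about, or more cleanly: since we want a bound on $\P(S_n \geq t)$, introduce the stopping time $\tau = \inf\{k : S_k \geq t\}$ and run the argument for the stopped martingale $M_{n \wedge \tau}$, on which $M_{k-1} \leq M_0 + t$ for all $k \leq n$ (using $S_{k-1} < t$ before $\tau$ and the increment bound $M$ at the jump, absorbed appropriately — or simply note $S_{\tau-1} < t$ suffices if we only need the pre-jump value). With this replacement, $\E[e^{\lambda(M_{k\wedge\tau} - M_{(k-1)\wedge\tau})} \mid \mathcal G_{k-1}] \leq \exp\bigl(\theta(\lambda)(\sigma_k + \phi_k(M_0 + t))\bigr)$ where $\theta(\lambda) = (e^{\lambda M} - 1 - \lambda M)/M^2$. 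Multiplying these bounds along $k = 1, \ldots, n$ (the product telescopes into a supermartingale, so $\E[e^{\lambda S_n}] \leq \exp(\theta(\lambda) V)$ with $V = \sum_{i=1}^n \sigma_i + (M_0 + t)\sum_{i=1}^n \phi_i$), Markov's inequality gives $\P(S_n \geq t) \leq \exp(\theta(\lambda) V - \lambda t)$.

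It remains to choose $\lambda$ and simplify. The exact optimizer of $\theta(\lambda)V - \lambda t$ produces the sharp Bennett function $V M^{-2} h(Mt/V)$ with $h(u) = (1+u)\log(1+u) - u$; the stated bound \eqref{bennettbound} is the familiar Bernstein relaxation using $h(u) \geq u^2 / (2(1 + u/3))$, equivalently bounding $\theta(\lambda) \leq \lambda^2 / (2(1 - \lambda M/3))$ for $0 < \lambda < 3/M$ and taking $\lambda = t/(Mt/3 + V)$. Substituting and collecting terms yields exactly
\begin{align*}
\P(M_n - M_0 \geq t) \leq \exp\Bigl(- \frac{t^2}{2(Mt/3 + \sum_{i=1}^n \sigma_i + (M_0 + t)\sum_{i=1}^n \phi_i)}\Bigr).
\end{align*}
The main obstacle is the non-deterministic conditional-variance bound involving $M_{n-1}$: without care this makes the product of conditional MGF bounds fail to be a clean supermartingale. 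The stopping-time localization (or, alternatively, a direct induction on $n$ proving $\E[e^{\lambda S_n} \I{\max_{k \le n} S_k < t}] \leq \exp(\theta(\lambda)V)$) is the device that resolves it; everything else is the standard Chernoff computation. Since this is Theorem 7.3 of \cite{survey}, in the paper itself I would likely just cite it, but the above is the self-contained argument.
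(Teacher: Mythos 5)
The paper offers no proof of this proposition; it is quoted verbatim from Chung and Lu \cite{survey}, Theorem 7.3, so there is no in-paper argument to compare against. Your self-contained derivation is correct and is essentially the standard proof of that theorem: the scalar bound $e^{\lambda x}\le 1+\lambda x+\theta(\lambda)x^2$ for $x\le M$ with $\theta(\lambda)=(e^{\lambda M}-1-\lambda M)/M^2$ is valid because $u\mapsto (e^u-1-u)/u^2$ is nondecreasing, and the stopping-time localization at $\tau=\inf\{k:S_k\ge t\}$ is exactly the right device to replace the random variance proxy $\sigma_k+\phi_k M_{k-1}$ by the deterministic $\sigma_k+\phi_k(M_0+t)$, since the increment of the stopped martingale at step $k$ vanishes off $\{\tau\ge k\}$ and on that event $M_{k-1}<M_0+t$; the Chernoff step with $\theta(\lambda)\le \lambda^2/(2(1-\lambda M/3))$ and $\lambda=t/(Mt/3+V)$ then reproduces \eqref{bennettbound} exactly. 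The only points worth tightening in a write-up are (a) stating the telescoping cleanly as an induction via the tower property for $\E[e^{\lambda(M_{n\wedge\tau}-M_0)}]$ rather than the slightly loose ``product of conditional bounds'' phrasing, and (b) observing that the statement implicitly requires the denominator in \eqref{bennettbound} to be positive (in the paper's application $M_0=\fW_0\ge 0$, so this is harmless).
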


\section{Proofs of the main results}

We start be recalling that the process $\sW_n$ and $\fW_n$ are martingales \cite[Proposition 4]{KuMa201314}.
To unify the notation, let
$Y_n$ and $e_n$ be defined as
\begin{equation*}
Y_n=g_n(W_n-e_n),\quad \text{with} \quad
e_n=
\begin{cases}
0 & \text{for triangular urn models},\\
\E[W_n] & \text{for large-index urn models},
\end{cases}
\end{equation*}
such that 
\begin{equation*}
Y_n=
\begin{cases}
\mathfrak{W}_n & \text{for triangular urn models},\\
\mathcal{W}_n  & \text{for large-index urn models}.
\end{cases}
\end{equation*}
In the remaining of the paper, we denote by $X_{n}$  the martingale difference 
$$
X_{n}=Y_{n}-Y_{n-1},\quad n\ge 1.
$$


\subsection{Martingale differences: bounds and moments}

Aiming at applications of Proposition \ref{prop:heyde} we investigate the martingale differences $X_n, n  \geq 1$. By the simple Lemma 3 in \cite{KuMa201314} which applies to all our models, there exists a deterministic constant $K$ such that, for all $n \geq 1$, we have 
\begin{align} \label{simplebound} |X_n| \leq K n^{-\Lambda}. \end{align}
It turns out that this rather trivial bound is of the right order for large-index urns with $1/2 < \Lambda < 1$ and triangular urns with $\Lambda = 1$. Therefore \eqref{simplebound} is sufficient to verify conditions \textbf{L2}, \textbf{P2} and \textbf{P3} in Proposition \ref{prop:heyde}. In the case of triangular urns, condition \textbf{L2} can be checked with the help of \eqref{simplebound} for $\Lambda > 1/2$. It is only the case of triangular urns with $\Lambda \leq 1/2$ where more precise estimates are required. For the sake of completeness, we give the first order terms of $\E[X_n^4]$ in all cases below in Lemma \ref{lem:fourth}. Further, \eqref{simplebound} also settles \eqref{con2} and condition \textbf{L2} as will become clear in the proof of the main theorems below.

Our starting point to compute second and fourth moments is the following observation which can be verified by means of direct computations:
\[
X_{n+1}=g_{n+1}\big(\hat{Y}_{n}+\Delta_{n+1} - a_m \big), \quad \hat{Y}_{n}=\Big(\frac{1}{g_n}-\frac{1}{g_{n+1}}\Big)Y_n+e_n-e_{n+1} + a_m,
\]
with $\Delta_n$ as given in~\eqref{MuliDrawsLinDistEqn1}.
By the forward equation~\eqref{MuliDrawsLinDistEqn1}, it follows that the conditional distribution of $X_{n+1}$ given $\field_{n}$ is 
\begin{equation}
\label{TailSumEqn1}
\P \left(X_{n+1}=g_{n+1}\left(\hat{Y}_{n}+k\frac{\sigma}m \Lambda\right)\Big | \field_{n} \right)
=p_{n+1;m,k}, \quad 0\le k\le m. 
\end{equation}


In the following we collect the asymptotic expansions of the conditional and unconditional expected value of the second moment. 
\begin{lemma} \label{lem:exp}
We have the following asymptotic statements for both \momp and \morp:
\begin{itemize}
	\item For triangular urn models with $\Lambda=1$:
	$$
	\E[X_{n+1}^2\given \field_n]\sim \frac{T_0^2}{m n^2}\left( \frac{\fW_\infty}{T_0}\Big(1-\frac{\fW_\infty}{T_0} \Big)\right),
	\quad \E[X_{n+1}^2]\sim\frac{T_0^2}{m n^2}  \cdot \E \left[\frac{\fW_\infty}{T_0}\Big(1-\frac{\fW_\infty}{T_0} \Big)\right].
	$$
	and
	$$
	s_n^2\sim \frac{T_0^2}{m n } \cdot \E\left[\frac{\fW_\infty}{T_0}\Big(1-\frac{\fW_\infty}{T_0} \Big)\right].
	$$
	\item For triangular urn models with $\Lambda<1$:
	$$
	\E[X_{n+1}^2\given \field_n]\sim  n^{-\Lambda-1} \frac{\sigma Q \Lambda^2}{m }\fW_\infty,
	\quad
	\E[X_{n+1}^2]\sim  n^{-\Lambda-1} \frac{\sigma Q \Lambda^2 W_0}{m }.
	$$
	and $$
	s_n^2\sim n^{-\Lambda} a_{m-1} \Lambda Q W_0. 
	$$
	\item For large-index urns:
	$$
	\E[X_{n+1}^2\given \field_n]\sim n^{-2\Lambda} \frac{a_m b_0 Q^2\Lambda^2}{m(1-\Lambda)^2},
	\quad 
	\E[X_{n+1}^2]\sim n^{-2\Lambda} \frac{a_m b_0 Q^2\Lambda^2}{m(1-\Lambda)^2},
	$$
	and $$
	s_n^2\sim  n^{-2\Lambda+1}\frac{a_m b_0 Q^2 \Lambda^2}{m(2\Lambda-1)(1-\Lambda)^2}.
	$$
\end{itemize}
\end{lemma}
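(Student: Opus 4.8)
The plan is to prove Lemma \ref{lem:exp} by extracting the first-order asymptotics of the conditional second moment from the explicit conditional law of $X_{n+1}$ in \eqref{TailSumEqn1}, then averaging to get the unconditional version, and finally summing (an Abel/Euler--Maclaurin argument) to obtain $s_n^2$. First I would record, using \eqref{TailSumEqn1}, that
\begin{align*}
\E[X_{n+1}^2 \given \field_n] = g_{n+1}^2 \left( \hat Y_n^2 + 2 \hat Y_n \frac{\sigma \Lambda}{m} \E[\xi_n \given \field_n] + \frac{\sigma^2 \Lambda^2}{m^2} \E[\xi_n^2 \given \field_n] \right),
\end{align*}
where $\xi_n \in \{0,1,\dots,m\}$ is the number of white balls drawn in step $n+1$, so that $\E[\xi_n \given \field_n]$ and $\E[\xi_n^2 \given \field_n]$ are the first two moments of a hypergeometric $\mathbf{hyp}(T_n, W_n, m)$ (for $\mom$) or a binomial $\mathbf{bin}(m, W_n/T_n)$ (for $\mor$); in both cases these moments are, to leading order, $m W_n / T_n$ and $m^2 (W_n/T_n)^2 + O(W_n/T_n)$. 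I would then substitute the asymptotics $g_n \sim Q n^{-\Lambda}$, $1/g_n - 1/g_{n+1} \sim -\Lambda Q^{-1} n^{\Lambda-1}$ from \eqref{ExpansionGn}, together with the strong law $W_n/n \to \zeta = a_m/(1-\Lambda)$ (\eqref{convasma}) for large-index urns and $W_n \sim \fW_\infty n^\Lambda / Q$, $W_n/T_n \to \fW_\infty/T_0$ for triangular urns with $\Lambda < 1$, $=1$ respectively, and identify which terms dominate in each of the three regimes. The key structural fact is that $\hat Y_n$ collects the "drift" corrections $e_n - e_{n+1} + a_m$ which, in the large-index case, are designed (via the definition $e_n = \E[W_n]$ and \eqref{eta}) to cancel the leading linear term, leaving $\hat Y_n$ of lower order, while in the triangular case $e_n = 0$ so $\hat Y_n = (1/g_n - 1/g_{n+1}) Y_n + a_m$; tracking these cancellations carefully is what produces the stated leading constants.

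In more detail: for large-index urns, $\hat Y_n$ is $o(n^{1-\Lambda})$ after the mean cancellation, so $g_{n+1}^2 \hat Y_n^2$ and the cross term are negligible compared to $g_{n+1}^2 \frac{\sigma^2\Lambda^2}{m^2} \cdot m^2 (W_n/T_n)^2 \sim Q^2 n^{-2\Lambda} \Lambda^2 (a_m/\sigma)^2 / (1-\Lambda)^2 \cdot$(variance of the draw count divided appropriately) — here one must use that the relevant quantity is the \emph{variance} $\V[\xi_n \given \field_n] \sim m (W_n/T_n)(B_n/T_n) \to m \zeta (\sigma - \zeta \cdot)/\dots$, which after simplification with $\zeta = a_m/(1-\Lambda)$ and $\sigma - \zeta(1-\Lambda)/\dots = b_0/\dots$ gives the factor $a_m b_0 / (m(1-\Lambda)^2)$ stated. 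For triangular $\Lambda = 1$, $\Lambda$-cancellation makes $g_n \sim Q/n$ with $Q = \Gamma(T_0/\sigma + 1)/\Gamma(T_0/\sigma) = T_0/\sigma$, and the dominant contribution is again the conditional variance of the draw, now $\sim m (\fW_\infty/T_0)(1 - \fW_\infty/T_0)$, times $g_{n+1}^2 (\sigma/m)^2$, yielding $T_0^2/(mn^2) \cdot (\fW_\infty/T_0)(1-\fW_\infty/T_0)$. For triangular $\Lambda < 1$, the situation is different: $W_n$ grows only like $n^\Lambda$, so $W_n/T_n \to 0$, the binomial/hypergeometric is essentially concentrated on drawing zero white balls, and the dominant term in $\E[\xi_n^2\given\field_n]$ is the \emph{linear} term $m W_n/T_n \sim m \fW_\infty n^{\Lambda-1}/(Q\sigma) \cdot$const; combined with $g_{n+1}^2 (\sigma\Lambda/m)^2 \sim Q^2 n^{-2\Lambda}\sigma^2\Lambda^2/m^2$ this produces $n^{-\Lambda-1} \sigma Q \Lambda^2 \fW_\infty / m$ after using $a_{m-1} = a_m + \frac{\sigma}{m}(a_{m-1}-a_m) = \frac{\sigma\Lambda}{m}$ in the triangular case $a_m = 0$, and one checks the cross term $2 g_{n+1}^2 \hat Y_n \frac{\sigma\Lambda}{m} \E[\xi_n\given\field_n]$ is of the same order $n^{-1-\Lambda}$ but with a coefficient that must be combined — this bookkeeping is the most delicate point.

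Taking unconditional expectations is then immediate from dominated convergence (using the uniform bound \eqref{simplebound}, $|X_n| \le K n^{-\Lambda}$, so $n^{2\Lambda} X_n^2$ is bounded and the almost-sure limits of the conditional expectations are integrable — for $\Lambda = 1$ one uses $\fW_\infty \le T_0$, for $\Lambda < 1$ one uses $\E[\fW_\infty] = W_0$ from Theorem \ref{th1}), replacing $\fW_\infty$ by its mean where appropriate. Finally $s_n^2 = \sum_{i \ge n} \E[X_i^2]$ is obtained by summing the unconditional asymptotics: $\sum_{i\ge n} i^{-2\Lambda}\sim n^{1-2\Lambda}/(2\Lambda-1)$ for large-index urns ($\Lambda > 1/2$ ensures convergence), $\sum_{i \ge n} i^{-\Lambda-1} \sim n^{-\Lambda}/\Lambda$ for triangular $\Lambda<1$ (this turns the constant $\sigma Q\Lambda^2 W_0/m$ into $\sigma Q\Lambda W_0/m = a_{m-1}\Lambda Q W_0$ using $a_{m-1} = \sigma\Lambda/m$), and $\sum_{i\ge n} i^{-2}\sim 1/n$ for triangular $\Lambda = 1$. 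The main obstacle I anticipate is not any single estimate but the careful synchronization of the several $1/n$-order correction terms (from the expansion of $g_n$, from $\hat Y_n$, and from the lower-order term of the hypergeometric/binomial second moment) to make sure no contribution of the claimed leading order is overlooked — particularly in the triangular $\Lambda < 1$ regime where the dominant term comes from a subdominant term of the draw-count distribution, and in verifying the large-index case that the mean-cancellation in $\hat Y_n$ really does push its square below the $n^{-2\Lambda}$ threshold rather than contributing at the same order.
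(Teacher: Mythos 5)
Your overall route coincides with the paper's: start from the exact conditional law \eqref{TailSumEqn1}, express $\E[X_{n+1}^2\given\field_n]$ through the first two moments of the draw count (hypergeometric for model $\mathcal M$, binomial for model $\mathcal R$), substitute the asymptotics of $g_n$ together with the strong laws for $W_n$, pass to unconditional expectations via moment convergence, and sum by Euler--Maclaurin. Two steps, however, need repair.

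First, in the large-index case your claim that $g_{n+1}^2\hat Y_n^2$ and the cross term are negligible is false: $\hat Y_n\to -a_m\Lambda/(1-\Lambda)$, a nonzero constant, so both terms are of the exact order $n^{-2\Lambda}$ of the answer. The paper keeps all three contributions, which combine as $\tfrac{\Lambda^2a_m^2}{(1-\Lambda)^2}-\tfrac{2\Lambda^2a_m^2}{(1-\Lambda)^2}+\cdots$ in \eqref{equiv3}. Equivalently --- and this is the clean way to phrase what you were reaching for with the ``variance'' remark --- since $\E[X_{n+1}\given\field_n]=0$ and $\hat Y_n$ is $\field_n$-measurable, one has exactly $\E[X_{n+1}^2\given\field_n]=g_{n+1}^2(\sigma\Lambda/m)^2\,\V[\xi_n\given\field_n]$, and the binomial/hypergeometric variance $\sim m W_nB_n/T_n^2\to m a_m b_0/(\sigma^2(1-\Lambda)^2)$ yields the stated constant. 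Either way the cancellation must actually be carried out; it cannot be dismissed by an order-of-magnitude bound. (Conversely, in the triangular case with $\Lambda<1$ the cross term you flag as being of order $n^{-1-\Lambda}$ is in fact of order $n^{-2}=o(n^{-1-\Lambda})$ and genuinely negligible, consistent with the paper's expansion \eqref{equiv2}.)

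Second, your passage to unconditional expectations by dominated convergence using $|X_n|\le Kn^{-\Lambda}$ fails for triangular urns with $\Lambda<1$: there the relevant normalization is $n^{1+\Lambda}$, and $n^{1+\Lambda}X_{n+1}^2\le K^2n^{1-\Lambda}$ is unbounded, so the crude bound does not dominate at the right scale. The paper avoids this by writing $\E[X_{n+1}^2\given\field_n]=\Lambda a_{m-1}g_{n+1}Y_n/n+\Gro(Y_n^2/n^2)+\Gro(n^{-1-2\Lambda})$ with a deterministic error term and invoking convergence of $Y_n$ in every $L_p$ (indeed $\E[Y_n]=W_0$ exactly here). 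This is an easy fix, but as written your justification has a gap in precisely the regime where the leading term comes from a subdominant feature of the draw-count distribution.
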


\begin{proof}
First, we derive an exact representation of the second moment in terms of $\hat{Y}_n$ and $Y_n$. By \eqref{TailSumEqn1}, almost surely, 
\begin{align*}
\E[X_{n+1}^2\given \field_n]
&=g_{n+1}^2\sum_{k=0}^m\left(\hat{Y}_{n}+k\frac{\sigma}m \Lambda\right)^2p_{n+1;m,k}=g_{n+1}^2\sum_{k=0}^m\left(\hat{Y}_{n}^2+2 \hat{Y}_{n}\frac{\sigma}m \Lambda k +\frac{\sigma^2\Lambda^2}{m^2}k^2\right)p_{n+1;m,k}.
\end{align*}
The sums are readily evaluated using the basic properties of the binomial and the hypergeometric distributions. 
We obtain the expression 
\begin{align}
\E[X_{n+1}^2\given \field_n]
&=g_{n+1}^2\left[\hat{Y}_n^2 + 2 \hat{Y}_n \left(\frac{Y_n}{g_n}+e_n\right)\frac{\sigma\Lambda}{T_n}\right]\\
&\qquad+g_{n+1}^2\frac{\sigma^2\Lambda^2}{m T_n}\left(\frac{Y_n}{g_n}+e_n\right)\times
\begin{cases}
\Big(\frac{m-1}{T_n-1}\big(\frac{Y_n}{g_n}+e_n-1\big)+1\Big) & \text{for \mom},\\
\Big(\frac{m-1}{T_n}\big(\frac{Y_n}{g_n}+e_n\big)+1\Big) & \text{for \mor}.
\end{cases} \label{oldlem}
\end{align}
We continue with the case of  triangular urn models with $a_m=0$, $e_n=0$. From the last display, it follows
\begin{align} \label{sttr}
\E[X_{n+1}^2\given \field_n] 
=g_{n+1}^2\left[\hat{Y}_n^2 + 2\hat{Y}_n\frac{Y_n}{g_n}\frac{\sigma\Lambda}{T_n}\right]
+g_{n+1}^2\frac{\sigma^2\Lambda^2}{m T_n}\frac{Y_n}{g_n}\times
\begin{cases}
\Big(\frac{m-1}{T_n-1}\big(\frac{Y_n}{g_n}-1\big)+1\Big)  & \text{for \mom},\\
\Big(\frac{m-1}{T_n}\frac{Y_n}{g_n}+1\Big) & \text{for \mor}.
\end{cases}
\end{align}
Assume first that $\Lambda=1$, such that 
$
\hat{Y}_n=-\frac{\sigma}{T_0}Y_n.
$ 
We get
\begin{align*} 
\E[X_{n+1}^2\given \field_n] 
=-g_{n+1}^2\frac{\sigma^2}{T_0^2}Y_n^2+g_{n+1}^2\frac{\sigma^2}{m T_0}Y_n\times
\begin{cases}
\Big(\frac{m-1}{T_n-1}\big(\frac{Y_n T_n}{T_0}-1\big)+1\Big) & \text{for \mom},\\
\Big(\frac{m-1}{T_n}\frac{Y_n T_n}{T_0}+1\Big)  & \text{for \mor}.
\end{cases}
\end{align*}
Since $Y_n$ converges with respect to all moments \cite{KuMaII201314}, for both sampling schemes, we deduce
\begin{align}
\E[X_{n+1}^2\given \field_n]
\sim g_{n+1}^2\Big[-\frac{\sigma^2}{T_0^2} \fW_\infty^2  + \frac{\sigma^2}{m T_0} \fW_\infty\Big((m-1)\frac{\fW_\infty}{T_0}+1\Big)\Big], \label{equiv1}
\end{align}
which gives the stated result recalling \eqref{ExpansionGn}. For $\Lambda < 1$, first note that
\begin{align} \label{quo}
\frac{g_{n+1}}{g_n} = 1 - \frac{\Lambda}{n + T_0 / \sigma + \Lambda}.
\end{align}
It is easy to identify the leading term in the expansion \eqref{sttr} which suffices to prove the statement. However, for later purposes, we state a more precise result. Applying 
\eqref{quo} to the summands in \eqref{sttr}, we have
\begin{align} \label{equiv2} \E[X_{n+1}^2\given \field_n] =   \Lambda a_{m-1} g_{n+1} \frac{Y_n}{n} -\frac{Y_n^2}{m n^{2}}   + \Gro(n^{-1- 2\Lambda}), \end{align}
the $\Gro$-term being deterministic. By \eqref{ExpansionGn}, as $n \to \infty$, the first summand dominates.
For large-index urns, since \eqref{quo} remains valid in this model, we have $\hat Y_n \sim e_{n} - e_{n+1} + a_m$. Using \eqref{eta}, we observe that
\begin{align*} 
\hat{Y}_{n} \sim -\frac{a_m\Lambda}{1-\Lambda}.
\end{align*}
From the explicit representation \eqref{oldlem} we collect the dominant contributions and obtain the model-independent expansion
\begin{align}
\E[X_{n+1}^2\given \field_n]&\sim g_{n+1}^2\Big[\frac{\Lambda^2a_m^2}{(1-\Lambda)^2}-2\frac{\Lambda^2 a_m^2}{(1-\Lambda)^2} 
+ \frac{\sigma^2\Lambda^2}{m \sigma}\cdot \frac{ a_m}{1-\Lambda}\Big(1+\frac{m-1}{\sigma}\cdot \frac{a_m }{1-\Lambda}\Big)\Big] \nonumber \\
& \sim Q^2n^{-2\Lambda}\frac{-a_m\Lambda^2(a_m+\Lambda\sigma-\sigma)}{m(1-\Lambda)^2}=n^{-2\Lambda} \frac{a_m b_0 Q^2\Lambda^2}{m(1-\Lambda)^2}. \label{equiv3}
\end{align}
Note that we have used \eqref{convasma} here. Since the convergence of $Y_n$ is with respect to all moments \cite[Theorem 1]{KuMaII201314}, the equivalences in \eqref{equiv1} and  \eqref{equiv3} as well as the expansion \eqref{equiv2} also hold in mean.
The expansions of $s_n^2$ are readily obtained using the Euler-MacLaurin summation formula.
\end{proof}

In the following lemma, denote by $r(m,p)$  the fourth central moment of the binomial distribution $\mathbf{bin}(m,p)$.
\begin{lemma} \label{lem:fourth}
For $n\to\infty$ the fourth moment $\E[X_{n+1}^4]$ of the martingale difference satisfies for both \momp and \morp:
\begin{itemize}
	\item For triangular urn models with $\Lambda=1$: $$\E[X_{n+1}^4]\sim \frac{T_0^4}{m^4} \E \left[  r \left(m, \frac{\fW_\infty}{T_0} \right) \right] n^{-4}.$$
	\item For triangular urn models with $\Lambda<1$: $$\E[X_{n+1}^4]\sim \frac{\sigma^3 Q^3 W_0}{m^3}  n ^{-3\Lambda - 1}. $$
	\item For large-index urns:  $$ \E[X_{n+1}^4]\sim  \left(\frac{\sigma \Lambda}{m} \right)^4 r \left( m, \frac{\zeta}{\sigma} \right) n ^{-4\Lambda}.$$
\end{itemize}
\end{lemma}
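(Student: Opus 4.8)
The plan is to exploit the explicit conditional law \eqref{TailSumEqn1}. Write $K_{n+1}$ for the (random) number of white balls drawn in step $n+1$. Since the model is affine, $a_{m-k}-a_m = k\sigma\Lambda/m$, so that $\Delta_{n+1}-a_m = \frac{\sigma\Lambda}{m}K_{n+1}$ and $X_{n+1} = g_{n+1}\bigl(\hat Y_n + \frac{\sigma\Lambda}{m}K_{n+1}\bigr)$. Because $Y_n$ is a martingale, $\E[X_{n+1}\mid\field_n]=0$; as $g_{n+1}>0$, this forces $\hat Y_n = -\frac{\sigma\Lambda}{m}\E[K_{n+1}\mid\field_n] = -\sigma\Lambda\,W_n/T_n$, the conditional mean of $K_{n+1}$ being $m W_n/T_n$ under both sampling schemes. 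Hence
\[
X_{n+1} = \tfrac{\sigma\Lambda}{m}\,g_{n+1}\bigl(K_{n+1}-\E[K_{n+1}\mid\field_n]\bigr), \qquad \E[X_{n+1}^4\mid\field_n] = \Bigl(\tfrac{\sigma\Lambda}{m}\Bigr)^4 g_{n+1}^4\,\mu_4(n),
\]
where $\mu_4(n)$ is the fourth central moment of $\mathbf{bin}(m,W_n/T_n)$ under \morp and of $\mathbf{hyp}(T_n,W_n,m)$ under \momp. This reduces the problem to the asymptotics of $\E[\mu_4(n)]$, to be combined with $g_{n+1}^4\sim Q^4 n^{-4\Lambda}$ (for $\Lambda=1$ one has the exact identity $g_{n+1}=T_0/(\sigma(n+1)+T_0)$).

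In the two regimes where $W_n/T_n$ has a nondegenerate limit the argument is immediate. For large-index urns $W_n/T_n\to\zeta/\sigma\in(0,1)$ almost surely by \eqref{convasma}, and for triangular urns with $\Lambda=1$ one has $W_n/T_n=\fW_n/T_0\to\fW_\infty/T_0\in(0,1]$ almost surely. In either case $\mu_4(n)$ is bounded (the draw count takes values in $\{0,\dots,m\}$) and converges almost surely to $r(m,\zeta/\sigma)$, respectively $r(m,\fW_\infty/T_0)$: under \morp because $r(m,\cdot)$ is continuous, under \momp because, as $N\to\infty$ with $K/N\to p$, $\mathbf{hyp}(N,K,m)$ converges to $\mathbf{bin}(m,p)$ in distribution, and, the distributions being uniformly bounded, in all moments. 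Bounded convergence then gives $\E[\mu_4(n)]\to r(m,\zeta/\sigma)$ and $\E[r(m,\fW_\infty/T_0)]$ respectively, and multiplying by $(\sigma\Lambda/m)^4 g_{n+1}^4$ yields the first and third stated equivalences.

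The delicate regime is triangular urns with $\Lambda<1$, where $W_n/T_n\to0$ and the draw degenerates in the limit. Here one needs the behaviour of the fourth central moment near $p=0$: for both $\mathbf{bin}(m,p)$ and $\mathbf{hyp}(N,K,m)$ with $K/N=p$ one has $\mu_4 = mp + \Gro(p^2)$ as $p\to0$, the term linear in $p$ coming from the single-success atom $\{K_{n+1}=1\}$, whose probability is $\sim m W_n/T_n$. Consequently $\E[\mu_4(n)] = m\,\E[W_n/T_n] + \Gro\bigl(\E[(W_n/T_n)^2]\bigr)$; since $\E[W_n/T_n]=W_0/(g_nT_n)\sim\frac{W_0}{Q\sigma}n^{\Lambda-1}$ (using $\E[\fW_n]=W_0$) while $\E[(W_n/T_n)^2]=\Gro(n^{2\Lambda-2})$ is of strictly smaller order (using that $\fW_n$ is $L^2$-bounded), one gets $\E[\mu_4(n)]\sim\frac{mW_0}{Q\sigma}n^{\Lambda-1}$, and multiplying by $(\sigma\Lambda/m)^4 g_{n+1}^4\sim(\sigma\Lambda/m)^4 Q^4 n^{-4\Lambda}$ gives the claimed order $n^{-3\Lambda-1}$ together with its leading constant.

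The main obstacle is precisely this last regime: one must pin down the leading-order behaviour of the fourth central moment of a binomial and of a hypergeometric distribution whose success parameter tends to zero --- isolating the term linear in $p$ and bounding the remainder uniformly --- and simultaneously control $\E[(W_n/T_n)^2]$, all of which ultimately rests on the $L^p$-bounds for $Y_n$ from \cite{KuMa201314,KuMaII201314} and on the Euler-MacLaurin asymptotics of $g_n$ already used in the proof of Lemma~\ref{lem:exp}. An entirely equivalent alternative, running parallel to that proof, would be to expand $\E[X_{n+1}^4\mid\field_n]$ directly via the conditional factorial moments $\E[\fallfak{K_{n+1}}{j}\mid\field_n]$ and Stirling numbers and to collect the dominant contribution in each case; the centered representation above merely shortens the bookkeeping.
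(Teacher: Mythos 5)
Your proof is correct and follows essentially the same route as the paper: both start from the explicit conditional law \eqref{TailSumEqn1}, dispose of the large-index and $\Lambda=1$ cases by almost sure convergence of $W_n/T_n$ plus bounded convergence (with the hypergeometric-to-binomial approximation for \mom), and isolate the triangular $\Lambda<1$ case as the delicate one, where the leading term comes from the linear-in-$p$ behaviour of the fourth moment of the draw count together with $\E[W_n/T_n]\sim \frac{W_0}{Q\sigma}n^{\Lambda-1}$. Your one genuine improvement is the exact identity $\hat Y_n=-\sigma\Lambda W_n/T_n$ forced by the martingale property, which turns $\E[X_{n+1}^4\mid\field_n]$ into $(\sigma\Lambda/m)^4g_{n+1}^4$ times the fourth \emph{central} moment of the draw count; the paper instead expands $(\hat Y_n+\frac{\sigma\Lambda}{m}\mathrm{Bin})^4$ by the binomial theorem and must argue separately that the cross terms $k=1,2,3,4$ are negligible, so your version does shorten the bookkeeping. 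Two small caveats: for \mom\ the hypergeometric fourth central moment is $mp(1+\Gro(p)+\Gro(1/T_n))$ rather than exactly $mp+\Gro(p^2)$, which is harmless here since $\Gro(\E[p]/n)=o(n^{\Lambda-1})$; and the constants you obtain carry factors $\Lambda^4$ (triangular, $\Lambda<1$) and $Q^4$ (large-index) absent from the lemma as stated --- but your constants are the ones consistent with the second-moment asymptotics of Lemma~\ref{lem:exp}, the discrepancy appears to be a typo in the statement, and only the orders of magnitude are used downstream (conditions \textbf{L2} and \textbf{P2}).
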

\begin{proof}

By \eqref{TailSumEqn1}, in \mor, 
\begin{align}\label{sta} \E[X_{n+1}^4] = g_{n+1}^4 \E \left[ \left ( \hat{Y}_{n} + \frac {\sigma \Lambda} {m} \text{Bin} \left( m, \frac {W_n}{T_n} \right) \right)^4 \right],  \end{align}
where $\text{Bin}(m,p)$ denotes a random variable with distribution $\mathbf{bin(m,p)}$. 
In the triangular case with $\Lambda = 1$, 
since $Y_n$ converges with respect to all moments, 
$$\E[X_{n+1}^4] \sim g_{n+1}^4 \E \left[ \left ( - \frac {\sigma}{T_0} \fW_\infty + \frac {\sigma} {m} \text{Bin} \left( m, \frac {\fW_\infty}{T_0} \right) \right)^4 \right].$$
The assertion now follows from \eqref{ExpansionGn}.
For large-index urns, by the same arguments, 
$$\E[X_{n+1}^4] \sim g_{n+1}^4 \E \left[ \left ( - \zeta \Lambda + \frac {\sigma \Lambda} {m} \text{Bin} \left( m, \frac {\zeta}{\sigma} \right) \right)^4 \right].$$
Since the binomial distribution $\mathbf{bin}(m,p)$ is the distributional limit of the hypergeometric distribution $\textbf{hyp}(a(n), b(n), m)$ as $b(n) / a(n) \to p$, the same results hold under \mom. Note however, that the limiting random variable $\fW_\infty$ depends on the sampling scheme.
For triangular urns with $\Lambda < 1$, one needs to be more precise. First, applying the binomial theorem to \eqref{sta} gives
$$g_{n+1}^{-4} \E[X_{n+1}^4] =  \sum_{k=0}^4 \binom{4}{k} \left(\frac {\sigma \Lambda} {m}\right)^{4-k} \E \left[ \hat{Y}_{n}^k   \left( \text{Bin} \left( m, \frac {W_n}{T_n} \right) \right)^{4-k} \right]. $$
Upon bounding the Binomial random variable from above by $m$, it follows that the summands $k=2, 3, 4$ are of the order at most $n^{2 \Lambda -2}$ and turn out to be asymptotically negligible. Regarding the summand $k=1$, note that
$$\E\left[ n ^{1 - \Lambda} \hat Y_n \left(\text{Bin} \left( m, \frac {W_n}{T_n} \right) \right)^3 \right] \to 0, \quad n \to \infty.$$
This follows by the theorem of dominated convergence since it holds in probability for the integrand which it is moreover bounded from above by $m^2 a_{m-1}$. Finally, for the last summand $k = 0$, we compute
$$g_{n+1}^4 \E \left[ \frac {\sigma^4} {\Lambda^4 m^4} \left( \text{Bin} \left( m, \frac {W_n}{T_n} \right) \right)^4 \right] \sim \frac{\sigma^3 Q^3 W_0}{m^3}  n ^{-3\Lambda - 1} . $$
Here, we have used that  $\E \left[ \text{Bin}(m,p)^4 \right]\sim m p + \Gro(p^2)$ as $p \to 0$.
This finishes the proof for \mor. For \mom, again by approximating the hypergeometric distribution by the binomial distribution, we obtain the analogous result.
\end{proof}
In the case of triangular urns with $\Lambda = 1$ in Lemma \ref{lem:exp} and \ref{lem:fourth} we have implicitly assumed that $0 < \fW_\infty < T_0$ almost surely. This will be justified later when we show that $\fW_\infty$ has a density on $[0,T_0]$ without relying on any results in the lemmas. Analogously, we will show that $\fW_\infty > 0$ almost surely for $\Lambda < 1$ needed in Lemma \ref{lem:exp}.


\subsection{Proofs of Theorems \ref{thm:large} and \ref{thm:tri}}
We start with the central limit theorems and the laws of the iterated logarithm relying on Proposition \ref{prop:heyde} postponing the verification that the martingale limits have non-atomic distributions. First of all, \eqref{con1} with $\eta$ given as in the theorems as well as condition \textbf{L2} can be checked directly with the help of Lemmas \ref{lem:exp} and \ref{lem:fourth}. Using the expansion of $s_n$ in Lemma \ref{lem:exp} and the bound \eqref{simplebound}, it is easy to see that, in all three urn models and for any $\varepsilon > 0$, there exists $n_0 \in \N$ such that, for all $n \geq n_0$, we have $|X_n| < \varepsilon s_n$. This verifies conditions \eqref{con2} and \textbf{L1}. It remains to check conditions \textbf{P1}, \textbf{P2}, \textbf{P3} in Theorem \ref{thm:large} 
(and in Theorem \ref{thm:tri} for $\Lambda > 1/2$). The moment convergence \textbf{P1} was proved in \cite[Theorem 1]{KuMaII201314}, it also follows from the fact that the limiting random variables have exponentially small tails as shown below. \textbf{P2} immediately follows from \eqref{simplebound}. Similarly, \textbf{P3} follows by an application of Minkowski's inequality. 

We move on to the tail bounds on the limiting random variables. To this end, note that
 $$X_n - (g_n - g_{n-1}) W_{n-1} = g_n(W_n - W_{n-1}) - a_m g_{n-1}.$$ 
 Hence, choosing $C > 0$ such that $g_n \leq C (n+1)^{-\Lambda}$ and denoting $q = \max |a_k|$, by Proposition \ref{azuma},
$$\P (|\sW_n| \geq t) \leq 2 \exp\left( -\frac{2t^2}{\sum_{i=1}^n q^2 (g_i + g_{i-1})^2}\right) \leq 2 \exp\left(-\frac{t^2}{2 q^2 C^2 \sum_{i=1}^n i^{-2\alpha}}\right).$$
Thus, $\sW_\infty$ has Subgaussian tails. The claim follows analogously in the case of triangular urns for $\Lambda > 1/2$.
For triangular urns with $\Lambda \leq 1/2$, by Proposition \ref{bennett}, using \eqref{simplebound}, in order to show a bound of the form \eqref{bennettbound} for $\fW_n$, it is enough to verify that, almost surely,
\begin{align}\E[X_{n+1}^2 | \mathcal F_n] \leq \sigma_n + \phi_n \fW_n, \quad n \geq 0, \label{ben} \end{align}
with deterministic, non-negative and summable sequences $\sigma_n, \phi_n$. It is here where we need the full strength of expansion \eqref{equiv2}. By the triangle inequality, upon bounding one of the factors $Y_n$ in the second term from above by $g_n T_n$, there exists a deterministic number $C > 0$, such that, for all $n \geq 0$,
$$\E[X_{n+1}^2 | \mathcal F_n] \leq Q ( \Lambda a_{m-1} + \sigma) \fW_n  n^{-1 - \Lambda}  + C n^{-1-\Lambda}.$$
Thus, \eqref{ben} is satisfied with 
 $\phi_n = \Gro(n^{-1-\Lambda})$ and $\sigma_n = \Gro(n^{-1-\Lambda})$.

In order to show the existence of a density for $\fW_\infty$, we roughly follow the ideas in \cite{ChenWei}. By the postponed Lemma \ref{lem:density}, we need to show that \begin{align} \label{aim} p_n := \max_{0 \leq k \leq m(n+1)} \P ( W_n = W_0 + k a_{m-1})  = \Gro(n^{-\Lambda}). \end{align}
Let
$$ s^*(k,n) := \sum_{i \in S_n} \P (W_{n+1} = W_0 + ka_{m-1} | W_n = W_0 + (k-i)a_{m-1} ), $$ where $S_n$ denotes the set of integers $0 \leq i \leq m$ with $\P(W_n = W_0 + (k-i) a_{m-1}) > 0$. Decomposing with respect to the position of the Markov chain $W_n$ at time $n-1$ gives rise to
\begin{align*}
p_n \leq \max_{0 \leq k \leq m(n+1)} s^*(k,n) p_{n-1}  \leq \prod_{i=1}^n \max_{0 \leq k \leq m(i+1)} s^*(k,i) \leq \exp \left(- \sum_{i=1}^n \left( 1 - \max_{0 \leq k \leq m(i+1)} s^*(k,i) \right) \right).
\end{align*}
For a set $ A \subseteq [0, n + 1] := \{0, 1,  \ldots, n, n+1 \}$, let $s(A) := \sup_{k \in A} s^*(k,n)$. Then, \eqref{aim} follows if we can show that, 
\begin{align} \label{sZ} s([0,n+1]) = 1 - \frac\Lambda n + \Gro(n^{-2}). \end{align}
Note that,  
\begin{align} \label{s0} s(\{0\}) = s^*(0,n) = \left(1- \frac{W_0}{T_n}\right)^m = 1 - \frac{m W_0}{n \sigma} +  \Gro(n^{-2}),\end{align}
and 
\begin{align} \label{sn} s(\{n+1\}) = s^*(n+1,n) = \left(1- \frac{B_0}{T_n}\right)^m = 1 - \frac{m B_0}{n \sigma} +  \Gro(n^{-2}), \end{align}
Next, for $i \in S_n$,
\begin{align*} \P (W_{n+1}  = W_0 + a_{m-1}k & | W_n = W_0 + a_{m-1}(k-i)) \\
 & = \frac{1}{T_n^m} \binom{m}{i} \left(W_0 + (k-i) a_{m-1}\right)^i  \left(T_n - W_0 - (k-i)a_{m-1}\right)^{m-i}. \end{align*}
We start with the case $\Lambda < 1$. Since $\sigma > m a_{m-1}$, there exists $n_0$ such that, for all $n \geq n_0$, we have
$T_n - W_0 - (k-i) a_{m-1} \geq 0$. For these $n$, we can bound $s^*(k,n) \leq s(k,n)$ with
\begin{align} \label{sbound} s(k,n) =  \sum_{i = 0}^{\min(k,m)}   \frac{1}{T_n^m}  \binom{m}{i} \left(W_0 + (k-i) a_{m-1} \right)^i  \left(T_n - W_0 - (k-i) a_{m-1} \right)^{m-i}. \end{align}
Direct computations show that   
\begin{align} \label{sm} s([1, m]) = 1 - \frac{\Lambda}{n} +  \Gro(n^{-2}). \end{align}
For $k \geq m$ the same expansion is essentially given in Lemma 4.2 in \cite{ChenKu2013+}; however, the arguments there  are incomplete. 
By an application of the binomial theorem, we arrive at an expression computed  on page 1182 in \cite{ChenKu2013+}: for $k\geq m$,
$$ s(k,n) =  \frac{1}{T_n^{m}} \sum_{\ell = 0}^m a_{m-1}^{m-\ell} T_n^\ell\binom{m}\ell \sum_{i = 0}^{m-\ell}\binom{m-\ell}{i} (-1)^{m - \ell - i} ( W_0 /a_{m-1} + k - i)^{m-\ell}.$$
From here, we use the  following identity, compare equation (5.42) in Graham, Knuth and Patashnik \cite{GraKnuPa},
\begin{equation}
\label{nth-Difference}
\sum_{i\ge 0}\binom{j}{i}(-1)^i(d_0+d_1i+\dots + d_j i^j) =(-1)^j j!d_j, \quad j\in\N_0,
\end{equation}
with arbitrary coefficients $d_\ell$, in order to evaluate the inner sum
$$
 \sum_{i = 0}^{m-\ell} \binom{m-\ell}{i} (-1)^{m - \ell - i} ( W_0 /a_{m-1} + k - i)^{m-\ell}
= (-1)^{m-\ell}(m-\ell)!.
$$
Consequently, we obtain 
$$ s(k,n) =  \frac{ m!}{T_n^{m}} \sum_{\ell = 0}^m (-1)^{m-\ell} a_{m-1}^{m-\ell} \frac{T_n^\ell}{\ell!}.$$
In particular, we make the crucial observation that, for $k \geq m$, $s(k,n)$ is independent of $k$. (Note that it is also independent of $W_0$.) Hence, 
\begin{align} \label{smain} s([m, m(n+1)])  = 1 - \frac{ma_{m-1}}{T_n} +  \Gro(n^{-2}) = 1 - \frac{\Lambda}{n} +  \Gro(n^{-2}).\end{align}
Combining the last display, \eqref{s0} and \eqref{sm}, since $W_0 \geq a_{m-1}$, we have proved \eqref{sZ}.
For $\Lambda = 1$, we can use the bound \eqref{sbound} only for $k \leq mn + B_0 / a_{m-1}$. Thus, $s([m, mn + \lfloor B_0 / a_{m-1}\rfloor]) = 1 - \frac \Lambda n + \Gro(n^{-2})$. 
 For $mn +  \lfloor B_0 / a_{m-1} \rfloor < k \leq m(n+1)-1$, we can bound
$$s^*(k,n) \leq \sum_{i = k - mn}^{m}   \frac{1}{T_n^m}  \binom{m}{i} \left(W_0 + (k-i) a_{m-1} \right)^i  \left(T_n - W_0 - (k-i) a_{m-1} \right)^{m-i}.$$
Again, by direct computation one can check that only the summands $i = m, m-1$ are of relevance and lead to a bound of the right order. \eqref{sZ} now follows
as in the case $\Lambda < 1$ where we additionally need \eqref{sn} and $B_0 \geq a_{m-1}$.

We move on to the case $W_0 < a_{m-1}, \Lambda < 1$, and use the notation  $\fW_\infty(w,b)$ for the martingale limit when the process is started with $w$ white and $b$ black balls. 
Let $\ell > 0$ and $j \geq a_{m-1}$ such that $\P(W_\ell= j) > 0$.
Conditioned on the event $\{W_\ell = j\}$, by the Markov property of $W_n$, the limit $\fW_\infty(W_0,B_0)$ is distributed like $\fW_\infty(j, T_\ell - j)$ (modulo a deterministic factor due to the normalization). Therefore, it has a bounded density. Hence, the distribution of $\fW_\infty(W_0,B_0)$ (now, unconditionally) admits a (possibly unbounded) density if $W_\ell \to \infty$ almost surely as $\ell \to \infty$. This follows  from the central limit theorem noting that, in probability, $W_\ell$ can be bounded from below by the sum of independent Bernoulli variables with success probabilities $W_0/T_i, 1 \leq i \leq n$. Alternatively, this also follows from an application of a conditional version of the second Borel-Cantelli Lemma as worked out in \cite{ChenWei}. For $W_0, B_0 < a_{m-1}$ and $\Lambda = 1$, the proof runs along the same lines.

For \mom, similar arguments apply and we only consider the details in the main regime where $m \leq k \leq m(n+1)$ assuming for simplicity that $\Lambda < 1$. Here, we improve upon an argument from \cite{ChenWei}: 
similarly to \eqref{sbound}, for all $n$ sufficiently large, we define
\begin{align*}
s(k,n)& =\sum_{i=0}^{m}\frac{\binom{W_0+(k-i)a_{m-1} }i \binom{T_n-W_0-(k-i)a_{m-1} }{m-i}}{\binom{T_n}m} \\
& =\frac{1}{\fallfak{T_n}m}\sum_{i=0}^{m}\binom{m}i \fallfak{\big(T_n-W_0-(k-i)a_{m-1} \big)}{m-i}\fallfak{\big(W_0+(k-i)a_{m-1}\big)}i.
\end{align*}
By the binomial theorem for the falling factorials, we obtain after a change of summation
$$
s(k,n)=\frac{1}{\fallfak{T_n}m}\sum_{\ell=0}^{m}\binom{m}{\ell}\fallfak{T_n}{\ell}
\sum_{i=0}^{m-\ell}(-1)^{m-i-\ell}\fallfak{\big(-W_0-(k-i)a_{m-1} \big)}{m-i-\ell} \fallfak{\big(W_0+(k-i)a_{m-1}\big)}i.
$$
The product of the falling factorials is a polynomial in the variable $i$ of degree $m-\ell$ 
with leading coefficient $(-1)^{m-\ell}a_{m-1}^{m-\ell}$; the concrete values of the other coefficients are of no importance.
By identity~\eqref{nth-Difference}, we obtain 
$$
\sum_{i=0}^{m-\ell}(-1)^{m-i-\ell}\fallfak{\big(-W_0-(k-i)a_{m-1}\big)}{m-i-\ell} \fallfak{\big(W_0+(k-i)a_{m-1}\big)}i
= (-1)^{m-\ell}a_{m-1}^{m-\ell}(m-\ell)!,
$$
such that
$$
s(k,n)=\frac{m!}{\fallfak{T_n}m}\sum_{\ell=0}^{m}(-1)^{m-\ell}a_{m-1}^{m-\ell}\frac{\fallfak{T_n}{\ell}}{\ell!}.
$$
This directly leads to the desired expansion \eqref{smain} for \mom.



Considering large-index urns, by the same argument applied to $W_n - \E[W_n]$, the existence of a bounded density for $\sW_\infty$ follows if, uniformly in $0 \leq k \leq m (n+1)$, as $n \to \infty$,
$$  \sum_{i = 0}^m \P (W_{n+1} = W_0 + (n+1)a_{m} + hk | W_n = W_0 +  n a_{m}  + h(k-i)) = 1 - \frac \Lambda n + \Gro(n^{-2}),$$
where we abbreviated $h = a_{m-1} - a_m$.
The latter follows by the same calculation as above.
 This finishes the proof.

\begin{lemma} \label{lem:density}
Let $X_n$ be a sequence of random variables and $g_n$ a real-valued sequence such that, for all $n \geq 1$, the difference $X_n - g_n$ is integer-valued.  Assume that,  for some $\alpha > 0$ and some finite random variable $X$, we have $n^{-\alpha} X_n \to X$ in distribution. If, 
$$K := \liminf_{n \to \infty} n^\alpha \max_{m \in \Z} \P ( X_n - g_n = m) < \infty,$$
then $X$ admits a  density on $(-\infty,\infty)$ which can be bounded uniformly by $K$.
\end{lemma}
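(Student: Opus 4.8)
The plan is to prove Lemma \ref{lem:density} by a standard characteristic-function argument combined with a local-limit-type estimate. The key observation is that a random variable $X$ has a density bounded by $K$ precisely when its distribution function is Lipschitz with constant $K$, equivalently when $\P(X \in [x, x + \delta]) \leq K \delta$ for all $x$ and all sufficiently small $\delta > 0$. So first I would fix $x \in \R$ and $\delta > 0$ and aim to bound $\P(X \in (x, x+\delta])$.

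The main step is to transfer the estimate on $X$ to the discrete approximations $X_n$. Since $n^{-\alpha} X_n \to X$ in distribution, for any continuity points $x, x+\delta$ of the law of $X$ we have $\P(X \in (x,x+\delta]) = \lim_n \P(n^{-\alpha} X_n \in (x, x+\delta]) = \lim_n \P(X_n \in (n^\alpha x, n^\alpha (x+\delta)])$. Now $X_n - g_n$ is integer-valued, so the event $\{X_n \in (n^\alpha x, n^\alpha(x+\delta)]\}$ is a disjoint union of events $\{X_n - g_n = m\}$ over at most $\lfloor n^\alpha \delta \rfloor + 1$ integers $m$. Each such event has probability at most $\max_{m\in\Z}\P(X_n - g_n = m) =: q_n$, and by hypothesis $\liminf_n n^\alpha q_n = K < \infty$. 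Passing to a subsequence along which $n^\alpha q_n \to K$, we get
\begin{align*}
\P(X \in (x, x+\delta]) \leq \liminf_{n} (\lfloor n^\alpha \delta\rfloor + 1) q_n = \liminf_n \left( \delta + \frac{\lfloor n^\alpha \delta \rfloor + 1 - n^\alpha \delta}{n^\alpha} \right) n^\alpha q_n = K\delta.
\end{align*}
This holds for all continuity points, and since continuity points are dense and the bound is uniform, it extends to all $x, \delta$ by monotonicity and right-continuity of the distribution function. Hence $F_X(x+\delta) - F_X(x) \leq K\delta$ for all $x$ and $\delta > 0$, which is exactly the statement that $F_X$ is $K$-Lipschitz, so $F_X$ is absolutely continuous with a density bounded by $K$ almost everywhere.

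The only genuinely delicate point is the bookkeeping with continuity points and the interchange of $\liminf$ with the counting bound: one must be careful that the set of integers $m$ contributing to the interval $(n^\alpha x, n^\alpha(x+\delta)]$ has cardinality at most $n^\alpha \delta + 1$ uniformly, and that $q_n$ is the correct uniform bound over that set; but since $q_n$ is already the supremum over \emph{all} integers this is immediate. I would also note that one should first establish the bound for $\delta$ along a countable dense set (or simply all $\delta$) using that for fixed $x$ at most countably many $\delta$ fail to make $x + \delta$ a continuity point, so the bound $\P(X \in (x, x+\delta]) \leq K\delta$ holds on a dense set of pairs and extends by right-continuity. No further ingredients are needed; this is a soft argument and none of the steps requires substantial computation.
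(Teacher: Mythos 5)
Your proof is correct and follows essentially the same route as the paper: bound $\P(X\in I)$ for an interval $I$ by the number of lattice points of $g_n+\Z$ in $n^\alpha I$ times $\max_m\P(X_n-g_n=m)$, take the $\liminf$ along a subsequence realizing $K$, and conclude that $F_X$ is $K$-Lipschitz. The only cosmetic difference is that the paper invokes the Portmanteau lemma for open intervals, which sidesteps your bookkeeping with continuity points (and your opening mention of a ``characteristic-function argument'' is a red herring, since no characteristic functions are used).
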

\begin{proof}
For $- \infty <  a < b < \infty$, by the Portmanteau Lemma,
\begin{align*}
\P ( a < X < b) & \leq \liminf_{n \to \infty} \P ( a n^\alpha < X_n < b n^\alpha) \\ 
& \leq \liminf_{n \to \infty} \max_{m \in \Z} \P ( X_n - g_n = m) (b-a) (n^\alpha + 2) = K (b-a).
\end{align*}
Thus, the distribution function of $X$ is Lipschitz and therefore absolutely continuous. For any density $f$, the last display implies that $f \leq K$ Lebesgue almost everywhere. The claim follows by modifying $f$ on a null-set if necessary.
\end{proof}

\section{Appendix}
\begin{proof}[Proof of Lemma \ref{lem:ten}]
A scheme is tenable if and only if, almost surely, for all $n \geq 1$, both $W_{n-1} \geq -a_{m-R_n}$ and $B_n \geq -b_{R_n}$, where $R_n$ denotes the number of white balls in the sample obtained in step $n$.  Hence, under  \mor, the process is tenable if all coefficients of $\matM$ are non-negative. Similarly, tenability follows in \mom\ if $a_k \geq -(m-k), b_k \geq -k$ for all $0 \leq k \leq m$. For a tenable urn scheme, we define $\mathcal R$ as the range of the Markov chain $W_n$, that is, the set of integers $\ell \in \N_0$ with $\P(W_n = \ell) > 0$ for some $n \geq 0$.

We consider \mor, assume that $W_0, B_0 \geq 1, a_j < 0$ for some $0 \leq j \leq m$ and tenability. Then, $a_i < a_j$ for $0 \leq i \leq j-1$. If $j > 0$, then, at time $t = \lceil - W_0 / a_j \rceil - 1$, we have $0 < W_t \leq -a_j$ with positive probability. This contradicts tenability. Hence, $j = 0$. It is clear that $a_0 | z$ for all $z \in \mathcal R$. In particular, $a_0 | W_0$ and $a_0 | (W_0 + (a_{m-1} - a_m))$, hence $a_0 | (a_{m-1} - a_m)$. The same arguments apply for $b_0, \ldots, b_m$. The cases $W_0 = 0$ or $B_0 = 0$ can be treated analogously. Finally, it is obvious that the urn process is tenable if $a_0, b_m$ satisfy the conditions stated in the theorem and all remaining coefficients are non-negative. This finishes the proof of \emph{i)}.

We move on to \mom. If $a_{m-1} - a_m \geq -1$, then, since $a_m \geq 0$, we have $a_0 \geq -m$. Hence, the only interesting case is when $h := a_{m-1} - a_{m} \leq -2$ and $a_k < -(m-k)$ for some $0 \leq k \leq m$.  Assume the scheme is tenable and let $j$ be maximal with  $a_j < - (m-j)$. If $j \geq 1$, then $a_1 < -m+1$ and $m-h-1 < -a_0$. 
As $\Delta < 0$, the urn is of small index. We have $W_n, B_n \to\infty$ almost surely. Given a sufficiently large number of white and black balls in the urn, upon first drawing $\ell_1$ samples containing $m$ white balls and then $\ell_2$ samples containing $m-1$ white balls, we remove $-\ell_1 a_0 - \ell_2 a_1$ white balls from the urn. Therefore, there exists $r \in \mathcal R$ with $m \leq r \leq m + h -1$. But then $r < -a_0$ violating tenability. 
It follows that $j=0$, that is,  $a_0 < -m$ and $a_k \geq -(m-k)$ for $k=1, \ldots, m$. Obviously, $\mathcal R \cap [m, \infty] \subseteq (W_0 + g_a \Z) \cap [m, \infty]$. For $r \geq m$ with $r = W_0 + c g_a, c \in \Z$, we have $r \in \mathcal R$ if $r  -da_0 \in \mathcal R$ for some $d \in \N$. Again, since the urn is of small index, $r  -da_0 \in \mathcal R$ holds for all $d$ sufficiently large. Hence, $\mathcal R \cap [m, \infty] = (W_0 + g_a \Z) \cap [m, \infty]$. Obviously, we must have $g_a \geq -a_{0} - m + 1$. In that case, tenability implies that $[W_0]_{g_a} \notin \{ [m]_{g_a}, \ldots, [-a_0 - 1]_{g_a}\}$. On the other hand, if $a_0 < -m$ and $[W_0]_{g_a} \notin \{ [m]_{g_a}, \ldots, [-a_0 - 1]_{g_a}\}$, then $[m, -a_0 - 1] \cap \mathcal R = \emptyset$. Thus, $W_{n-1} \geq -a_{m-R_n}$ almost surely for all $n \geq 1$. The same arguments apply for black balls.
\end{proof}





\begin{thebibliography}{99}

\bibitem{AtKa68}
{\sc K. B. Athreya and S. Karlin} (1968).
Embedding of Urn Schemes into Continuous Time Markov Branching Processes and Related Limit Theorems,
\emph{Ann. Math. Statist.}
{\bf 39}, 1801--1817.


\bibitem{Bagchi1985}
{\sc A.\ Bagchi and A.\ K.\ Pal} (1985). 
      Asymptotic normality in the
      generalized P\'olya-Eggenberger urn model, with an application to
      computer data structures, 
      \emph{SIAM J. Algebraic Discrete Math.} {\bf 6}, 394--405.


\bibitem{BaiHuZhang}
{\sc Z.\ D.~Bai, F.\ Hu, L.-X. Zhang} (2002).
Gaussian approximation theorems for urn models and their applications,
\emph{Ann. Appl. Probab.,} {\bf 12}, 1149--1173.


\bibitem{BaraAlb}
{\sc A.-L.~Barab{\'a}si, R. Albert} (1999).
Emergence of scaling in random networks,
\emph{Science,} {\bf 286}, 509--512.

\bibitem{Chauvin1} 
{\sc B.~Chauvin, N.~Pouyanne and R.~Sahnoun} (2011).
			Limit distributions for large P\'olya\ urns, 
			\emph{Ann. Appl. Probab.} {\bf 21}, 1--32.     

\bibitem{Chauvin2}
{\sc B.~Chauvin, N.~Pouyanne, and C.~Mailler} (2015).
Smoothing equations for large \Polya\ urns, 
\emph{J. Theoret. Probab.} {\bf 28}(3), 923--957. 

\bibitem{ChenWei}
      {\sc M.-R.\ Chen and C.-Z.\ Wei} (2005).
       A New Urn Model,  \JAP\
       {\bf 42}, 964--976, 2005.
			
\bibitem{ChenKu2013+}
       {\sc M.-R. Chen and M. Kuba} (2013).  
       On generalized Polya urn models, 
			\JAP\ {\bf 50}(4), 909--1216.
			
\bibitem{survey}
    {\sc Chung, Fan and Lu, Linyuan} (2006).
   Concentration inequalities and martingale inequalities: a
              survey,
   \emph{Internet Math.} {\bf 3}(1),  79--127.

\bibitem{devjan}
{\sc L. Devroye, S. Janson} (2011).
Long and short paths in uniform random recursive dags, 
 \emph{Ark. Mat.} {\bf 49}(1), 61--77. 
 
\bibitem{diazetal}
{\sc 
J. Diaz, M. J. Serna, P. Spirakis, J. Toran, T. Tsukiji} (1994).
 On the expected depth of Boolean circuits, \emph{Technical Report LSI-94-7-R} Universitat Politecnica de Catalunya, Dep. LSI.    		
		
		
						
\bibitem{Eggenberger1923}
          {\sc F.\ Eggenberger and G.\ P\'olya} (1923). 
					\"Uber die Statistik verketteter Vorg\"ange, 
        \emph{Z. Angewandte Math. Mech.} {\bf 1}, 279--289.
				
			
\bibitem{FlaDumPuy2006}
{\sc P.~Flajolet, P.~Dumas and V.~Puyhaubert} (2006). 
Some exactly solvable models of urn process theory, 
\emph{Discrete Math. Theor. Comput. Sci. Proc.}, vol. AG, 59--118, in
``Proceedings of Fourth Colloquium on Mathematics and Computer
Science''.

\bibitem{FlaGabPek2005}
{\sc P.~Flajolet, J.~Gabarr{\'{o}} and H.~Pekari} (2005).
Analytic urns, 
\emph{Ann. Probab.} {\bf 33}, 1200--1233.

\bibitem{Fuchs2015}
{\sc M.~Fuchs} (2015).
A Note on the Quicksort Asymptotics, 
\RSA\ {\bf 46}(4), 677--687.

\bibitem{Freedman}
{\sc D. A.~Freedman} (1965).
Bernard Friedman's urn, 
Ann. Math. Statist {\bf 36}, 965--970.


\bibitem{Gouet93}
{\sc R.~Gouet} (1993).
Martingale Functional Central Limit Theorems for a Generalized Polya Urn, 
\emph{Ann. Probab.} {\bf 21}, 1624--1639.

\bibitem{GraKnuPa}
         {\sc R.\ L.\ Graham, D.\ E.\ Knuth, and O.\ Patashnik (1994)}.
         \emph{Concrete Mathematics}, 
         Addison-Wesley.

\bibitem{GrueKab2014}
{\sc R.~Gr\"ubel and Z.~Kabluchko} (2014+).
A functional central limit theorem for branching random walks, almost sure weak convergence, and applications to random trees,
\emph{to appear in Ann. Appl. Probab.} \ava\url{http://arxiv.org/abs/1410.0469}.
				

\bibitem{hall78b}
{\sc P.~Hall} (1978).
\newblock The convergence of moments in the martingale central limit theorem.
\newblock {\em Z. Wahrsch. Verw. Gebiete} {\bf 44}(3), 253--260.

\bibitem{hallheyde80}
{\sc P.~Hall and C.C.~ Heyde} (1980).
\emph{Martingale limit theory and its application. Probability and Mathematical Statistics}, 
             Academic Press, New York-London.

\bibitem{Heyde1977}
{\sc C.~C.~Heyde} (1977).
On central limit and iterated logarithm supplements to the martingale convergence theorem,
\JAP\ {\bf 14}(4), 758--775.

					
  \bibitem{Jan2004}
           {\sc S.\ Janson} (2004). 
           Functional limit theorems for multitype branching
           processes and generalized P{\'{o}}lya urns, 
           \emph{Stochastic Process. Appl.}
           {\bf 110}, 177--245.
					
  \bibitem{Jan2006}
           {\sc S.\ Janson} (2006). 
					Limit theorems for triangular urn schemes,
          \PTRF\ 
           {\bf 134}, 417--452.

\bibitem{Jan2010}
           {\sc S.\ Janson} (2010). 
					Moments of gamma type and the Brownian supremum process area,
          \emph{Probab. Surv.}
           {\bf 7}, 1--52.


\bibitem{JohnsonKotz1977}
             {\sc N.\ L.\ Johnson and S.~Kotz} (1977). 
             \emph{Urn Models and Their Application}, 
             John Wiley, New York.
						
\bibitem{JohnsonKotzMahmoud2004}
       {\sc N.\ L.\ Johnson, S.\ Kotz, and H.\ Mahmoud} (2004). 
       P\'olya-type urn models with multiple drawings, 
       \emph{J. Iran. Stat. Soc. (JIRSS)} 
       {\bf 3}, 165--173.

\bibitem{KonzemMahmoud}
{\sc S.~Konzem and H.~Mahmoud} (2014).
Characterization and Enumeration of Certain Classes of Tenable P\'olya Urns Grown by Drawing Multisets of Balls,
\emph{Methodol. Comput. Appl. Probab.},
1--17.
   
\bibitem{Neininger2015} 
{\sc R.~Neininger} (2015). 
Refined Quicksort asymptotics,
\emph{Random Structures and Algorithms} 
{\bf 46}, 346--361.

\bibitem{NeiningerKnape}
			{\sc M.~Knape and R.~ Neininger} (2014). 
			P\'olya urns via the contraction method, 
			\CPC\ 
			{\bf 23}(6), 1148--1186.	




\bibitem{KuMaPan2013+}
             {\sc M.\ Kuba, H.\ Mahmoud and A.\ Panholzer} 
             (2013). Analysis of a generalized Friedman's urn with multiple drawings,       
             \emph{Discrete Appl. Math.} {\bf 161}(18), 2968-2984.

\bibitem{KuMa201314}
             {\sc M.\ Kuba and H.\ Mahmoud} 
             (2015+). On urn models with multiple drawings I: urns with a small index,       
             \sub\ \ava\url{http://arxiv.org/abs/1503.09069}.

\bibitem{KuMaII201314}
             {\sc M.\ Kuba and H.\ Mahmoud} 
             (2015+). On urn models with multiple drawings II: large-index and triangular urns,     
             \sub\ \ava\url{http://arxiv.org/abs/1509.09053}.
             

\bibitem{Mah2008}
             {\sc H.\ Mahmoud} (2008). 
             \emph{P\'olya Urn Models}, 
             Chapman-Hall, Orlando.
						
\bibitem{Mah2012}
            {\sc H.\ Mahmoud} (2013). 
            Drawing multisets of balls from tenable balanced linear urns,
		\emph{Probab. Engrg. Inform. Sci.} {\bf 27}, 147--162.

\bibitem{Mah2014}
            {\sc H.\ Mahmoud} (2014). 
            The Degree Profile in Some Classes of Random Graphs that Generalize Recursive Trees,
		\emph{Methodol. Comput. Appl. Probab.} {\bf 16}, 527--538.

		
            
            

            
\bibitem{Moler}
             {\sc J.\ Moler, F.\ Plo and H.\ Urmeneta} (2013).
             A generalized \Polya\ urn and limit laws for the number of outputs in a family of random circuits,
            \emph{TEST}
            {\bf 22}, 46--61.
						
\bibitem{Mori2005}
{\sc T.~M\'ori} (2005).
The maximum degree of the Barabasi-Albert random tree. 
\CPC\ {\bf 14}, 339--€"348.

\bibitem{PRR}
{\sc E. A. ~Pek{\"o}z, A. R{\"o}llin and N. Ross} (2013).  Degree asymptotics with rates for preferential attachment random graphs,
\emph{Ann. Appl. Probab.} {\bf 23}(3), 1188--1218.

\bibitem{Pou2008}
{\sc N.~Pouyanne} (2008).  An algebraic approach to P\'olya processes,
\emph{Ann. Inst. Henri Poincaré Probab. Stat.} {\bf 44}(2), 293--323.
            
            
\bibitem{Renlund}
{\sc H. Renlund} (2010+).
Generalized P\'olya urns via stochastic approximation. 
\ava\url{http://arxiv.org/abs/1002.3716}. 

\bibitem{renyi}
{\sc A.~R{\'e}nyi and P.~R{\'e}v{\'e}sz} (1958).
\newblock On mixing sequences of random variables,
\newblock {\em Acta Math. Acad. Sci. Hungar} 9:389--393.



\bibitem{Sulzbach2015}
{\sc H.~Sulzbach} (2015+).
 On martingale tail sums for the path length in random trees, 
{\em accepted for publication in} \RSA\ \ava\url{http://arxiv.org/abs/1412.3508}.

\bibitem{tsuxha}
{\sc T. Tsukiji, F. Xhafa} (1996). 
On the depth of randomly generated circuits, \emph{Proceedings of Fourth European Symposium on Algorithms.}


\bibitem{TsukijiMahmoud2001}
            {\sc T.\ Tsukiji and H.\ Mahmoud} (2001). 
            A limit law for outputs in random circuits,
            \emph{Algorithmica} {\bf 31}(3), 403--412.
						
		
\end{thebibliography}
\end{document}